\newif\ifdraft

\let\ifdraft\iffalse 

\ifdraft
    \documentclass[reqno]{amsart}						
    \usepackage{microtype}
    \usepackage{fullpage}
\else
    \documentclass[smallcondensed]{svjour3}
    \smartqed  

    \makeatletter
    \def\cl@chapter{\@elt {theorem}}
    \makeatother
\fi


\ifdraft
\usepackage{amsmath}						
\usepackage{amssymb}						
\usepackage{amsfonts}						

\usepackage{amsthm}

\usepackage[foot]{amsaddr}						
\usepackage{array}

\usepackage{mathtools}						

\else
\usepackage{amsmath}
\usepackage{amsthm}
\usepackage{amssymb}
\fi

\usepackage[utf8]{inputenc}							
\usepackage[T1]{fontenc}						
\usepackage{lmodern}

\ifdraft
\usepackage{libertine}
\usepackage[libertine,libaltvw,cmintegrals,varbb]{newtxmath}
\fi


\usepackage{dsfont}							




\usepackage[
cal=cm,
]
{mathalfa}

\usepackage[labelfont={bf,small},labelsep=colon,font=small]{caption}	
\captionsetup[algorithm]{labelfont=bf,labelsep=colon}				

\usepackage[dvipsnames,svgnames]{xcolor}						
\colorlet{MyBlue}{DodgerBlue!75!Black}
\colorlet{MyGreen}{DarkGreen!85!Black}


\usepackage{subcaption}
\captionsetup[subfigure]{subrefformat=simple,labelformat=simple,font=footnotesize}
\usepackage{tikz}	\usetikzlibrary{patterns}
\usepackage{pgfplots}
\pgfplotsset{compat=1.14}
\usepgfplotslibrary{groupplots}

\usepgfplotslibrary{external}   
\tikzexternalize

\usepackage[many]{tcolorbox}
\usepackage[algo2e]{algorithm2e}
\usepackage{algorithm}
\usepackage{array}
\usepackage{multicol}
\usepackage{multirow}

\newtcolorbox{worker}[1]{%
    tikznode boxed title,
    boxsep = -1mm,
    enhanced,
    arc=0mm,
    interior style={white},
    boxrule=0.5mm,
    width=0.95\linewidth,
    attach boxed title to top center= {yshift=-\tcboxedtitleheight/2},
    fonttitle=\bfseries,
    colbacktitle=white,coltitle=black,
    boxed title style={size=normal,colframe=white,boxrule=0pt},
    title={#1}
    }

\usepackage{acronym}						
\usepackage{booktabs}						
\usepackage{latexsym}						
\usepackage{paralist}						
\usepackage{xspace}							

\ifdraft
\usepackage[sort&compress]{natbib}					


\fi

\usepackage{hyperref}
\hypersetup{
final,
colorlinks=true,
linktocpage=true,
pdfstartview=FitH,
breaklinks=true,
pdfpagemode=UseNone,
pageanchor=true,
pdfpagemode=UseOutlines,
plainpages=false,
bookmarksnumbered,
bookmarksopen=false,
bookmarksopenlevel=1,
hypertexnames=true,
pdfhighlight=/O,
urlcolor=Maroon,linkcolor=MyBlue!60!black,citecolor=DarkGreen!70!black,	
pdftitle={},
pdfauthor={},
pdfsubject={},
pdfkeywords={},
pdfcreator={pdfLaTeX},
pdfproducer={LaTeX with hyperref}
}


\numberwithin{equation}{section}						
\usepackage[sort&compress,capitalize,nameinlink]{cleveref}			


\ifdraft
    \theoremstyle{plain}
    \newtheorem{theorem}{Theorem}						
    \newtheorem{corollary}[theorem]{Corollary}					
    \newtheorem{corollary*}{Corollary}						
    \newtheorem{lemma}[theorem]{Lemma}					

    \theoremstyle{definition}
    \newtheorem{definition}[theorem]{Definition}				
    \newtheorem{definition*}{Definition}					
    \newtheorem{assumption}{Assumption}					
    \newtheorem{assumption*}{Assumptions}					

    \theoremstyle{definition}
    \newtheorem{remark}{Remark}						
    \newtheorem{remark*}{Remark}						
    \newtheorem{example}{Example}						
    \newtheorem{example*}{Example}						
\else
    \newtheorem{assumption}{Assumption}
\fi

\ifdraft
\fi

\numberwithin{equation}{section}						
\numberwithin{theorem}{section}						
\numberwithin{remark}{section}						
\numberwithin{example}{section}						

\usepackage{beramono}
\usepackage{listings}

\lstdefinelanguage{Julia}%
  {morekeywords={abstract,break,case,catch,const,continue,do,else,elseif,%
      end,export,false,for,function,immutable,import,importall,if,in,%
      macro,module,otherwise,quote,return,struct,switch,true,try,type,typealias,%
      using,while},%
   sensitive=true,%
   alsoother={},%
   morecomment=[l]\#,%
   morecomment=[n]{\#=}{=\#},%
   morestring=[s]{"}{"},%
   morestring=[m]{'}{'},%
}[keywords,comments,strings]%

\lstset{%
    language         = Julia,
    basicstyle       = \ttfamily,
    keywordstyle     = \bfseries\color{red},
    stringstyle      = \color{magenta},
    commentstyle     = \color{ForestGreen},
    showstringspaces = false,
}


\floatstyle{ruled}
\newfloat{Algorithm}{h!}{lop}

\newcommand{\M}{\mathsf{M}}





\DeclareMathOperator*{\argmin}{argmin}
\DeclareMathOperator{\dist}{dist}
\newcommand{\RR}{\mathbb{R}}

\newcommand{\prox}{\mathbf{prox}}
\newcommand{\Aff}{\operatorname{Aff}}

\newcommand{\idtest}{\mathsf{T}}

\usepackage{xifthen}

\newcommand{\idtestargs}[1][]{%
  \ifthenelse{\isempty{#1}}%
    {\idtest (\{x_{\ell}\}_{\ell\leq k},\{y_{\ell}\}_{\ell < k})}
    {\idtest^{#1} (\{x_{\ell}\}_{\ell\leq k},\{y_{\ell}\}_{\ell < k})}
}

\newcommand{\T}{\mathcal{T}_\gamma}

\usepackage{etoolbox}
\ifdef{\C}{                          
    \renewcommand{\C}{\mathcal{C}}
}{
    \newcommand{\C}{\mathcal{C}}
}

\newcommand{\ri}{\mathrm{ri}~}

\newcommand{\review}[1]{#1}
\newcommand{\secondreview}[1]{#1}
\usepackage[normalem]{ulem}
\newcommand{\reviewst}[1]{}

\newcommand{\rank}{\operatorname{rank}}


\def\addlegendimage{\csname pgfplots@addlegendimage\endcsname}

\begin{document}

\title{On the Interplay between Acceleration and Identification for the Proximal Gradient algorithm}
\author{Gilles Bareilles \and Franck Iutzeler}

\ifdraft
\else
\titlerunning{On the Interplay between Acceleration and Identification}
\institute{G. Bareilles \and F. Iutzeler \at Univ. Grenoble Alpes \\
              Laboratoire Jean Kuntzmann \\
              \email{firstname.lastname@univ-grenoble-alpes.fr}           
}

\date{Received: date / Accepted: date}
\fi

\maketitle

\begin{abstract}
    In this paper, we study the interplay between acceleration and structure identification for the proximal gradient algorithm. \review{While acceleration is generally beneficial in terms of functional decrease, we report and analyze several cases where its interplay with identification has negative effects on the algorithm behavior (iterates oscillation, loss of structure, etc.).} Then, we present a generic method that tames acceleration when structure identification may be at stake; it benefits from a convergence rate that matches the one of the accelerated proximal gradient under some qualifying condition. We show empirically that the proposed method is much more stable in terms of subspace identification compared to the accelerated proximal gradient method while keeping a similar functional decrease.
\keywords{Proximal Gradient  \and  Accelerated Methods \and Identification}
\end{abstract}

\section{Introduction}

In this paper, we consider composite optimization problems of the form
\begin{equation}
\label{eq:pb}
\min_{x\in\mathbb{R}^n} F(x) :=  f(x) + g(x)
\end{equation}
where $f$ and $g$ are convex functions. We are interested in the case where $f$ is smooth (i.e. differentiable with a Lipschitz-continuous gradient) while $g$ is nonsmooth and enforces some structure to the solutions of \eqref{eq:pb} (e.g. belonging to some set, sparsity, etc.).  This type of problem often appears in signal processing and machine learning in which case the goal is usually to find a point with a low error in the task at hand while maintaining some structure. For instance, taking $g$ as the $\ell_1$ norm promotes sparsity, it is commonly used in the recovery of sparse signals or compressed sensing \cite{candes2006stable}; we refer the reader to \cite{2015-vaiter-ps-review} for an overview.

Problem~\eqref{eq:pb} is typically solved by the proximal gradient algorithm, a special case of Forward-Backward splitting, which alternates a gradient step on the smooth function $f$ and a proximal operator on the nonsmooth function $g$; see e.g. \cite[Chap.~27]{bauschke2011convex}. Moreover, in order to improve the convergence of this algorithm, \emph{accelerated} (also called \emph{fast} or \emph{inertial}) versions have been widely promoted, notably thanks to the popularity of FISTA \cite{beck2009fista}. These modifications consist in producing the next iterate by linearly combining the previous outputs of the proximal operator. Using combination coefficients as per Nesterov's fast method \cite{nesterov1983method} (or similar ones \cite{attouch2016rate,chambolle2015convergence}) improves practically and theoretically the rate of convergence of the proximal gradient (nonetheless to the price of a more involved analysis, see e.g.  \cite{alvarez2001inertial,chambolle2015convergence,liang2019improving}).

Additionally, the handling of the nonsmooth function $g$ by a proximity operator enforces some \emph{structure} on the iterates. For instance, when $g$ is the $\ell_1$-norm, the associated proximity operator, often called soft-thresholding, puts coordinates with small values to zeros, thus producing sparse iterates \cite{donoho1995noising}. More generally, an important property of the proximity operator is that it has the same output for a neighborhood of inputs around the points where $g$ is non-differentiable, which means that reaching a neighborhood of the solution may be enough to capture the optimal structure\footnote{For instance, if $g$ is the absolute value on $\mathbb{R}$, the proximity operator of $g$ is equal to $0$ for all $x\in[-1,1]$ (and $x-\mathrm{sign}(x)$ elsewhere). Thus, if some sequence $x_k$ converges to a point within $(-1,1)$, then the output of the proximity operator of $g$ at $x_k$ will be equal to $0$ for all $k$ greater than some finite, but unknown, $K$. }.
This property is often called \emph{identification} and signals that the iterates generated by proximal methods can have the same structure (sparsity pattern, belonging to a set, rank, etc.) as the final optimum in finite time.
The analysis of this property, including conditions for identification, has attracted a lot of attention in the case of the projected gradient algorithm and then more generally in nonsmooth optimization; see e.g.  \cite{bertsekas1976goldstein,burke1988identification,lewis2002active,drusvyatskiy2013optimality}.
This interest is notably driven by the fact that i) identification helps reducing the dimension of the problem and thus allows a faster computation at each iteration; and ii) the uncovered structure often bears valuable information. For instance, in $\ell_1$-regularized regression problems such as the lasso, the proximal gradient algorithm can identify the non-zero coordinates of the solution and thus the most significant features \cite{tibshirani1996regression}.

\review{
Although the accelerated versions of the proximal gradient are known to be faster globally and locally around the optimum \cite{liang2017localcvFBtype}, there is a transient phase at the moment where the final manifold start being attained by the iterates where the interplay between identification and acceleration can be negative, causing the iterates to leave several times the optimal manifold before identifying it.}
\reviewst{Unfortunately, acceleration and identification may clash in practice.}Indeed, the accelerated proximal gradient is known to suffer from an oscillatory, non-monotonical behavior (see e.g.~\cite[Sec.~5.4]{liang2017localcvFBtype}) which may make the iterates leave the identification neighborhood. Several works of the literature considered modifications of the accelerated proximal gradient aiming at limiting its downsides using heuristic restarts \cite{scheinberg2014fast,o2015adaptive,ito2017unified,catalina2018revisiting} or adaptive acceleration \cite{giselsson2014monotonicity,poon2019trajectory}; unfortunately, most of these results are empirical and lack a refined analysis. A notable exception is when the acceleration is limited to the iterations where the functional value decreases \cite{beck2009fast,li2015accelerated}, in which case the usual convergence results hold.

In this paper, we provide structure-enhancing accelerations of the proximal point algorithm \review{that are aimed at producing iterates with stable identification properties while maintaining a satisfying convergence rate}. First, we motivate this study by illustrating \reviewst{the above-mentioned clash that can happen between acceleration and identification}\review{on several examples how acceleration and identification may interfere destructively, but also at times constructively}. Then, we introduce a proximal gradient method where, at each iteration, a test is performed to decide if acceleration should be performed or not to produce the next iterate \review{based on the structure uncovered by the method}. \review{This test is designed to benefit from the positive above-mentioned interactions while avoiding negative ones}. We provide two simple but efficient tests for which we show that i) both associated algorithms have the same theoretical rate as the accelerated proximal gradient under some qualifying condition; but ii) they exhibit a much more stable identification behavior in practice.

The paper is organized as follows. In \cref{sec:settingsrecalls}, we recall the main properties of the (accelerated) proximal gradient method, present the notion of identification, and illustrate the interplay between acceleration and identification. In \cref{sec:identifprom}, we introduce our structure-enhancing acceleration methods for which we carefully analyze the convergence. In \cref{sec:numexps}, we illustrate the merits of the proposed methods compared to the usual (accelerated) proximal gradient.

\section{Proximal Gradient, Acceleration, and Identification} \label{sec:settingsrecalls}

\subsection{Proximal Gradient}

We make the following usual assumption on the considered composite convex optimization problem \eqref{eq:pb}.

\begin{assumption}[On Problem~\eqref{eq:pb}] \label{assumption1} ~~
\begin{itemize}
    \item[(i)] $f$ is a differentiable convex function with an $L$-Lipschitz gradient;
    \item[(ii)] $g$ is a proper, convex, lower semi-continuous function;
    \item[(iii)] $\argmin_{x\in\mathbb R^n} F(x) \neq \emptyset$ and we define $F^\star = \min_{x\in\mathbb R^n} F(x)$.
\end{itemize}
\end{assumption}

Under these assumptions, the proximal gradient is arguably the baseline method for solving~\eqref{eq:pb}. Its iterations involve a gradient step on the smooth function $f$ and a proximal step on the possibly nonsmooth part $g$:
\begin{equation*} \tag{PG} \label{alg:PG}
    \left|
    \begin{array}{l}
         x_{k+1} = \prox_{\gamma g}(x_k - \gamma \nabla f(x_k))
    \end{array}
    \right.
\end{equation*}
where $\gamma>0$ is a fixed stepsize and the proximity operator of $g$ (see \cite[Chap.~6]{beck2017first} for an overview) is defined by
\begin{equation*}
    \prox_{\gamma g}(u) := \argmin_{w\in\mathbb R^n}\left\{ g(w) + \frac{1}{2\gamma}\|w-u\|^2 \right\} .
\end{equation*}

To lighten the notation, we will denote by $\mathcal T_\gamma$ the proximal gradient operator with step size $\gamma$:
$$
\mathcal T_\gamma(x) = \prox_{\gamma g}(x - \gamma \nabla f(x))
$$
for any $x\in\mathbb{R}^n$; the proximal gradient algorithm then writes $ x_{k+1} = \mathcal T_\gamma (x_k)$.

For any $0 < \gamma < 2/L$, the sequences of functional values $(F(x_k))$ and iterates $(x_k)$ produced by the proximal gradient algorithm converge monotonically and Féjèr monotonically\footnote{That is for all iterates $k>0$, $\|x_{k+1}-x^\star\| \le \|x_k-x^\star\|$ for any minimizer $x^\star\in\argmin_x F(x)$.}  respectively, at rate $1/k$ (more precisely, $F(x_k)-F^\star = \mathcal{O}(1/k)$ and $\|x_k - x_{k-1}\|^2 = \mathcal{O}(1/{k})$); see e.g.~\cite[Chap.~10]{beck2017first}.

\subsection{Acceleration}

The principle of adding an inertial (or momentum) step to accelerate the convergence of optimization methods stems from Nesterov's fast gradient \cite{nesterov1983method} and Polyak's heavy ball \cite{polyak1964some} methods for vanilla gradient descent. It was later generalized and analyzed for the proximal gradient algorithm in \cite{beck2009fista}, from which the accelerated proximal gradient is often called FISTA.

Mathematically, this acceleration consists in adding to the output of the proximal gradient $x_{k+1}$ an inertial term made of the difference between the two previous points $x_{k+1}-x_k$, weighted by an inertial coefficient $\alpha_{k+1}>0$. One iteration therefore reads:
\begin{equation*} \tag{Accel. PG} \label{alg:AccelPG}
    \left|
    \begin{array}{l}
        x_{k+1} = \prox_{\gamma g}(y_k - \gamma \nabla f(y_k)) \\
        y_{k+1} = x_{k+1}+\alpha_{k+1}(x_{k+1}-x_{k})
    \end{array}
    \right.
\end{equation*}
where the inertial sequence $(\alpha_{k})$ is chosen carefully as follows (see e.g.~\cite[Rem.~10.35]{beck2017first}).

\begin{assumption}[On the inertial sequence]
\label{hyp:alpha}
For any $k>0$,  $\alpha_{k+1}=\frac{t_{k}-1}{t_{k+1}}$ with:
\begin{itemize}
\item[(i)] $t_{k+1}^2-t_{k+1} \le t_{k}^2$;
\item[(ii)] $t_k \ge C k  $ for some $C>0$ and $t_0=1$.
\end{itemize}
\end{assumption}

The inertial sequence originally used in \cite{nesterov1983method,beck2009fista} comes from taking $t_k = ( 1+\sqrt{1+4t_{k-1}^{2}} ) / 2$ and $t_1=1$. For that choice and $0<\gamma< 1/L$, functional convergence occurs with a faster $\mathcal O(1/k^2)$ rate.

Other popular choices of the literature include: (i) $t_{k}=(k+a-1)/a$ with $a>2$ which allows to prove the convergence of the iterates in \cite{chambolle2015convergence} and a $o(1/k^2)$ improved functional convergence rate in \cite{attouch2016rate}; (ii) $t_k = (p +\sqrt{q+4t_{k-1}^{2}} ) / 2$, $t_1=1$, $p\in(0,1]$, $q>0$ in the recent \cite{liang2019improving}.

\subsection{Identification}

Let us consider a generic algorithm including a proximal step
\begin{align}
\label{eq:proxident}
        x_k = \prox_{\gamma g}(u_k)
\end{align}
such that $u_k$ converges to a point denoted by $u^\star$ (which implies that $x_k \to x^\star = \prox_{\gamma g}(u^\star)$ by non-expansivity of the proximity operator \cite[Prop.~12.27]{bauschke2011convex}).

We are particularly interested in the case where the limit point $x^\star$ belongs to some manifold of interest $\M$. In that context, identification means that the iterates $(x_k)$ will reach the manifold $\M$ in finite time. \review{For instance, when $g=\|\cdot\|_1$ (for instance in the lasso problem \cite{tibshirani1996regression}), the manifold of interest $\M$ can be equal to the sparsity pattern of $x^\star$ ($\M = \{ x : x_{[i]} = 0 \text{ for all } i \text{ s.t. } x^\star_{[i]} = 0 \}$ where $x_{[i]}$ stands for the $i$-th coordinate of $x$); meaning that identification brings information about the nullity of the coordinates of the problem solution, enabling feature selection or dimension reduction.

For identification to happen, a sufficient condition is that for any $u$ close to $u^\star$, the proximity operator maps $u$ to a point in $\M$. Mathematically, this qualifying condition writes
\begin{align*}
    \tag{QC}
    \label{eq:QC}
   \exists~\varepsilon>0 \text{ such that for all } u\in\mathcal B(u^\star, \varepsilon), \; \prox_{\gamma g}(u) \in\M.
\end{align*}

Considering the iteration \eqref{eq:proxident}, since $u_k\to u^\star$, after some finite but unknown time, $u_k\in\mathcal B(u^\star, \varepsilon)$. The qualifying condition \eqref{eq:QC} then implies that for such iterates $u_k$, $x_k=\prox_{\gamma g}(u_k)$ belongs to $\M$. Therefore after some finite time, $x_k\in\M$. This simple identification result can be formalized as follows.
}

\begin{lemma}[Identification] \label{lemma:identif}
Let $(x_k)$ and $(u_k)$ be a pair of sequences such that $x_k = \prox_{\gamma g}(u^k) \to x^\star = \prox_{\gamma g}(u^\star)$ and $\M$ be a manifold. If $x^\star \in\M$ and \eqref{eq:QC} holds, then, after some finite time, $x_k \in\M $.
\end{lemma}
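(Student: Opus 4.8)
The plan is to reduce the statement to a direct application of the qualifying condition \eqref{eq:QC}, exploiting the fact that the relevant convergence takes place in the pre-image space of the proximity operator. The key point to keep in mind is that \eqref{eq:QC} is a condition on the inputs $u$ lying near $u^\star$, whereas the conclusion concerns the outputs $x_k = \prox_{\gamma g}(u_k)$; the bridge between the two is simply the convergence $u_k \to u^\star$.

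First I would invoke \eqref{eq:QC} to fix a radius $\varepsilon > 0$ such that $\prox_{\gamma g}(u) \in \M$ for every $u \in \mathcal B(u^\star, \varepsilon)$. (Incidentally, evaluating this at $u = u^\star$ recovers $x^\star = \prox_{\gamma g}(u^\star) \in \M$, so the hypothesis $x^\star \in \M$ is consistent with, and indeed implied by, \eqref{eq:QC}.) Next, using that $u_k \to u^\star$, the very definition of convergence yields a finite index $K$ such that $\|u_k - u^\star\| < \varepsilon$, i.e. $u_k \in \mathcal B(u^\star, \varepsilon)$, for all $k \ge K$. Combining these two facts, for every $k \ge K$ the point $u_k$ lies in the ball on which \eqref{eq:QC} applies, so $x_k = \prox_{\gamma g}(u_k) \in \M$. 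This is exactly the claimed finite-time identification.

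The argument is short, and there is no genuine analytical obstacle: it is a plain translation of ``$u_k$ enters the identification ball'' into ``$x_k$ reaches $\M$''. The only point that deserves care is conceptual rather than technical — one must use the convergence of the pre-iterates $u_k$ and not merely $x_k \to x^\star$, since \eqref{eq:QC} is phrased in the input space. It is also worth emphasizing that the threshold $K$ is finite but a priori unknown, which is precisely the qualitative (as opposed to quantitative) nature of identification; obtaining explicit bounds on $K$ would require additional quantitative information on the rate of $u_k \to u^\star$ and on the geometry of $g$ near $u^\star$, which this lemma deliberately does not assume.
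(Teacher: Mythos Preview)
Your proposal is correct and follows exactly the paper's own argument: use \eqref{eq:QC} to obtain an $\varepsilon$-ball around $u^\star$ mapped into $\M$, then use $u_k \to u^\star$ to place $u_k$ in that ball for all $k$ large enough, whence $x_k = \prox_{\gamma g}(u_k) \in \M$. Your remark that the convergence must be used on the pre-iterates $u_k$ rather than on $x_k$ is precisely the point the paper emphasizes as well.
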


\review{
This result indicates that the iterates of any converging algorithm including a proximal step will identify the manifold $\M$ in finite time if \eqref{eq:QC} is satisfied. Note that this result depends only on the convergence of the iterates and the optimal pair $(x^\star,u^\star)$ but is independent from the algorithm itself or the initialization point. In particular, it applies to the two algorithms we are concerned with: the Proximal Gradient algorithm and its accelerated version.
}

\review{
\begin{remark}[About the Qualifying constraint] The condition \eqref{eq:QC} is rather general; notably, it does not constrain the manifold nor the algorithm. Nevertheless, we can show that it naturally encompasses usual qualifying constraints of the literature. For instance, if $g$ is partly smooth \cite{hare2004identifying} relative to $\M$ and the non degeneracy condition $(u^\star - x^\star)/\gamma \in \ri \partial g(x^\star)$ is verified, then \eqref{eq:QC} holds. We refer the reader to Appendix~\ref{appendix:QC} for a formal proof.

In the particular case of the proximal gradient method \eqref{alg:PG} or its accelerated version \eqref{alg:AccelPG}, $u^\star = x^\star - \gamma \nabla f(x^\star)$; then, the non degeneracy condition simplifies to $-\nabla f(x^\star) \in \ri ~\partial g(x^\star)$. This non-degeneracy condition, along with partial smoothness, was used in \cite[Th.~3.4]{liang2017localcvFBtype} to show an identification result similar to Lemma~\ref{lemma:identif} for the proximal gradient algorithm and its accelerated variants.
\end{remark}
}

\begin{remark}[Failure cases] \label{rk:QCfailure_example} If one cannot find a ball centered on $u^\star$ mapped to $\M$ by $\prox_{\gamma g}$, \review{then identification may or may not occur, depending on the algorithm and the initialization point. O}ne can have $x^\star\in\M$ but $x_k\not\in\M$ for every iteration \review{of proximal gradient, for example} with $f=\frac{1}{2}(\cdot-1)^2$ and $g=|\cdot|$\review{. The associated} composite problem $\min_{x\in\mathbb R} \frac{1}{2}(x-1)^2 + |x|$ has solution $x^\star=0$ (as the optimality condition writes $0 \in x^\star-1+\partial |x^\star|$). If one defines proximal gradient iterates as  $u_k = x_{k-1}-\gamma\nabla f(x_{k-1})$ and \eqref{eq:proxident} with $\gamma \in (0,1)$, then, for a positive starting point $x_0>0$,  $x_k = (1-\gamma)^k x_0 \to x^\star = 0$. Therefore, when considering the manifold $\M = \{0\}$,  $x^\star$ belongs to $\M$ and yet none of the iterates do. This example illustrates the failure of \eqref{eq:QC}. Indeed, $u_k = \gamma + (1-\gamma)^k x_0 \to u^\star = \gamma$ but there is no ball around $u^\star$ that is mapped to $\M = \{0\}$ since $\prox_{\gamma g}(u^\star + \varepsilon) = \varepsilon \notin \M$ for any $\varepsilon>0$. \review{Note finally that choosing $x_0<0$, or $\gamma = 1$, or adding an inertial step, would make the algorithm identify $\M$ in finite time, illustrating the dependency on the algorithm and the initialization point for such unqualified problems. }
\end{remark}

\subsection{On the practical use of identification}

\review{
Identification results can be somehow nebulous since they only ensure that the iterates will \emph{eventually} reach \emph{some} structure. Nevertheless, identification can still be leveraged in practice in many situations.

More precisely, given an optimization problem, the user is often able to define a collection of subspaces of interest $\C=\{\M_1, \ldots, \M_N\}$ representing the structure that the solution can have. For instance\footnote{Other examples include problems regularized with nuclear norm, for which the target structure is the rank of the current (matrix) iterate $x\in\RR^{m_1\times m_2}$. Manifolds of interest are defined as $\M_i =\{x\in\RR^{m_1\times m_2} : \rank(x)=i\}$, and the collection is $\M=\{\M_0, \ldots, \M_{\min(m_1,m_2)}\}$.
}, in $\ell_1$ regularized problems, the solution is expected to be sparse and thus we will naturally look at collection $\C = \{\M_1,  \ldots, \M_n\}$ where $\M_i = \{x\in\RR^n : x_{[i]}=0\}$. Then, we say that an algorithm {identifies} the optimal structure if, after some finite time, its iterates belong exactly to the same subspaces as $x^\star$; i.e. they have the same sparsity pattern as $x^\star$. As mentioned above, this phenomenon is highly valuable in many applications such as compressed sensing and machine learning for which finding some (near-)optimal pattern is also an important part of the problem.
This multiple identification result can be stated as follows.

\begin{lemma}[Multiple Identification] \label{lemma:identif2}
    Let $(x_k)$ and $(u_k)$ be a pair of sequences such that $x_k = \prox_{\gamma g}(u^k) \to x^\star = \prox_{\gamma g}(u^\star)$ and let $\C = \{\M_1,  \ldots, \M_N\}$ be a collection of manifolds. For any $\M\in\C$:\\
    (i) if $x^\star \in\M$ and \eqref{eq:QC} holds, then, after some finite time, $x_k \in\M $;\\
    (ii) if $x^\star \notin \M$, then, after some finite time, $x_k \notin\M $.
\end{lemma}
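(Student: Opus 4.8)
The plan is to prove the two assertions separately, noting that part (i) is precisely the content of the already-established \cref{lemma:identif} applied to a single manifold $\M\in\C$, so it requires no new argument. The only genuinely new work is part (ii): showing that if $x^\star\notin\M$, then the iterates eventually stay out of $\M$ as well. The natural idea is that the assumption $x_k\to x^\star$ together with $x^\star\notin\M$ should force $x_k\notin\M$ for large $k$, which boils down to an openness/closedness consideration on the complement of $\M$.

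\medskip

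First I would observe that the manifolds of interest $\M_i$ in the relevant applications (sparsity patterns, fixed-rank sets, affine subspaces) are \emph{closed} sets, or at least that their closure does not contain $x^\star$ when $x^\star\notin\M$; this is what guarantees that $x^\star$ lies in the interior of the complement $\RR^n\setminus\M$. Concretely, for the sparsity collection where $\M_i=\{x:x_{[i]}=0\}$, each $\M_i$ is a closed hyperplane, so its complement is open. Since $x^\star\notin\M$ means some fixed coordinate $x^\star_{[i]}\neq 0$, say $|x^\star_{[i]}|=\delta>0$, I would take $\varepsilon=\delta/2$ and invoke convergence $x_k\to x^\star$ to find a finite $K$ with $\|x_k-x^\star\|<\varepsilon$ for all $k\ge K$. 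Then $|x_{k,[i]}|\ge |x^\star_{[i]}|-\|x_k-x^\star\|>\delta/2>0$, so $x_k\notin\M$ for every $k\ge K$, which is exactly the claim.

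\medskip

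The main obstacle, and the point I would be careful to state cleanly, is the precise structural hypothesis on the manifolds that makes this work: the argument requires that $x^\star$ admit a neighborhood disjoint from $\M$, i.e.\ that $x^\star\notin\overline{\M}$ whenever $x^\star\notin\M$. For the typical collections considered here this holds because the relevant $\M$ are closed (hyperplanes, fixed-rank varieties are closed, etc.), so $x^\star\notin\M=\overline\M$ directly yields an open neighborhood in the complement. I would phrase part (ii) under this closedness assumption so that the continuity-of-convergence argument applies verbatim, using only $x_k\to x^\star$ and closedness of $\M$ — in particular, part (ii) needs neither the proximal structure nor \eqref{eq:QC}, only the topology of $\M$. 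Combining the restated \cref{lemma:identif} for (i) with this elementary separation argument for (ii) completes the proof.
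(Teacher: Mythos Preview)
Your proposal is correct and follows essentially the same approach as the paper: part (i) is reduced to \cref{lemma:identif}, and part (ii) uses that the complement of a closed manifold is open, so convergence $x_k\to x^\star$ forces $x_k\notin\M$ eventually. The only cosmetic difference is that the paper bundles all manifolds with $x^\star\notin\M$ into a single finite union and argues once on its (open) complement, whereas you handle one $\M$ at a time; since $\C$ is finite these are equivalent, and your explicit acknowledgment that closedness is the key structural hypothesis matches the paper's own remark following the lemma.
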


\begin{proof}
        The proof of the first point comes directly from Lemma~\ref{lemma:identif}.
        For the second part, consider the union of the manifolds to which $x^\star$ does not belongs $\overline{\M} = \cup \{ \M \in \C : x^\star \notin \M \}$. Then,  $x^\star\in \mathbb{R}^n \setminus \overline{\M}$ which is an open set. This means that as $x_k \to x^\star$, it will also belong to $ \mathbb{R}^n \setminus \overline{\M}$ in finite time and thus will not belong to any manifold in $ \overline{\M}$.
\end{proof}

From this result, we additionally get that $x_k$ will not identify manifolds to which $x^\star$ does not belong provided that the number of considered manifold is finite (which is a rather mild assumption). In addition, we see that this result, as many results in identification, also holds when the manifolds are replaced by mere closed sets (closedness is however fundamental as per the proof of Lemma~\ref{lemma:identif2}). We choose in this paper to stand with the notion of manifold out of consistency with a large part of the literature on identification and since many usual regularizers in signal processing and machine learning enforce a manifold-based partition of the space; see e.g. \cite{fadili2018sensitivity} and references therein.


}

\subsection{Interplay between Acceleration and Identification}
\label{sec:interplay}

\begin{figure}
     \centering
     \begin{subfigure}[t]{0.49\textwidth}
         \centering
        \includegraphics[width=\textwidth]{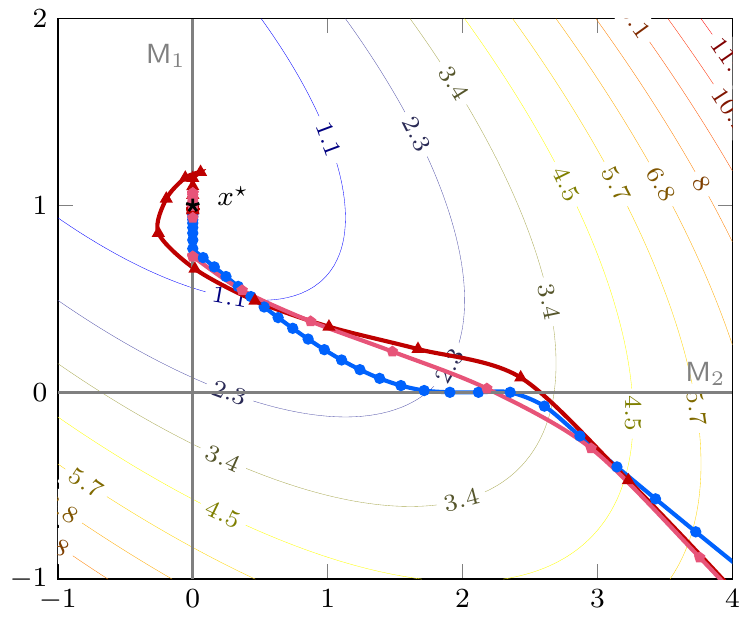}
         \caption{$g(x) = \|x\|_1$ ~~~~ $\M_1 = y$-axis \\ $\M_2 = x$-axis (two linear manifolds) \label{fig:IstaFistaLASSO}}
     \end{subfigure}\hfill
     \begin{subfigure}[t]{0.49\textwidth}
        \centering
        \includegraphics[width=\textwidth]{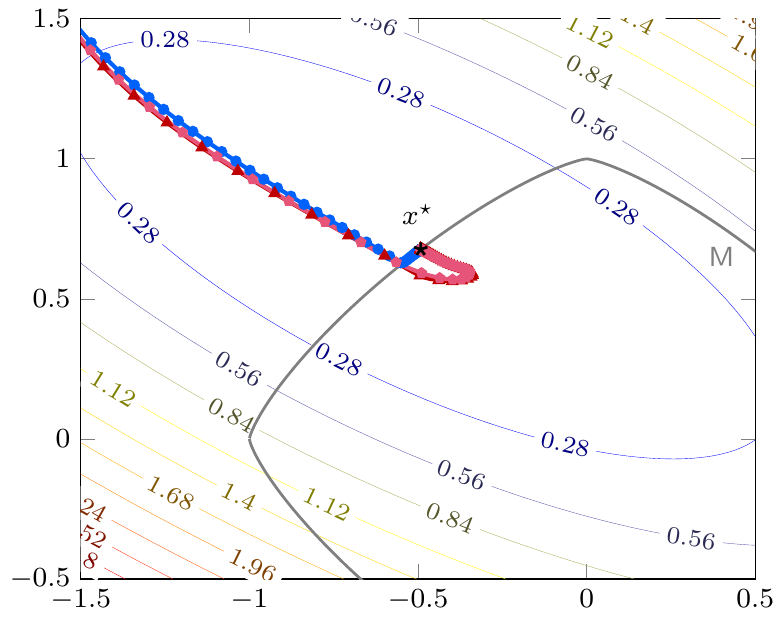}
         \caption{ $g(x) =  \max(0, \|x\|_{1.3}-1)$ \\ $\M = \mathcal S_{\|\cdot\|_{1.3}}(0, 1)$ (one curved manifold) \label{fig:1.3ball}}
    \end{subfigure}\\
    \begin{subfigure}[t]{0.49\textwidth}
        \centering
        \includegraphics[width=\textwidth]{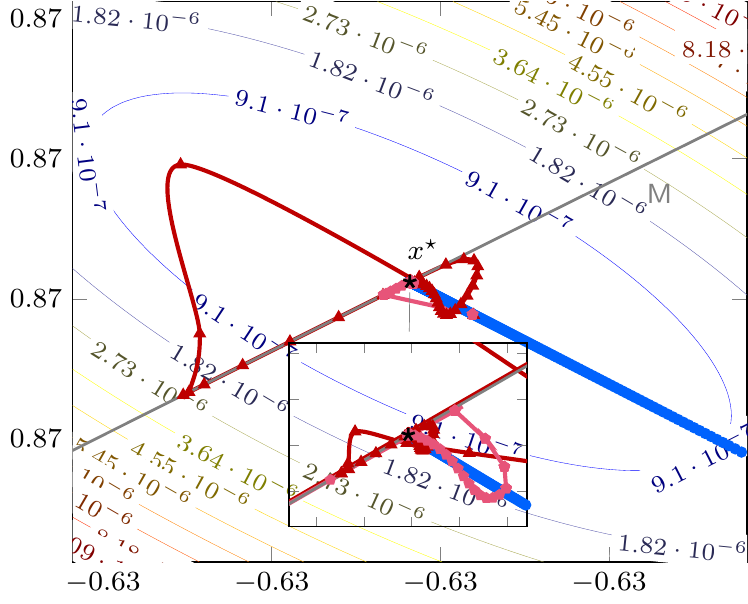}
        \caption{$g(x) =  \max(0, \|x\|_{2.6}-1)$  \\ $\M = \mathcal S_{\|\cdot\|_{2.6}}(0, 1)$ (one curved manifold)\label{fig:2.6ball}}
    \end{subfigure} \hfill
    \begin{subfigure}[t]{0.49\textwidth}
        \centering
        \vspace{-3cm}
        \includegraphics[width=0.7\textwidth]{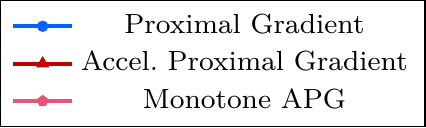}
    \end{subfigure}
    \caption{Iterates behavior for the Proximal Gradient (with and without acceleration)  when minimizing $\|Ax-b\|^2 + g(x)$ for different nonsmooth functions $g$. The candidate manifolds are precised in the captions and represented in gray. \label{fig:identif}}
\end{figure}


In this section, we argue that acceleration \reviewst{may disturb}\review{interferes with} the identification process that occurs with the proximal gradient algorithm\review{, sometimes delaying it, sometimes helping it\footnote{{Note that we are only interested here in the transient phase of identification when the iterates are in the process of reaching the optimal manifold. The behavior of the accelerated proximal gradient is usually better than the vanilla version before this phase and after identification; see e.g. the recent analyses of \cite{liang2017localcvFBtype,apidopoulos2020convergence} and references therein.}}}. We illustrate \review{and detail} this claim by considering the iterates of\reviewst{ the (accelerated)} proximal gradient\review{, accelerated proximal gradient and monotone accelerated proximal gradient, a slightly modified and widely used version of accelerated which ensures a monotonic decrease of iterates sometimes called MFISTA \cite{beck2009fast}. These algorithms are tested} on three problems in $\RR^2$ of the form $\|Ax-b\|^2 + g(x)$ for different nonsmooth functions $g$: the $\ell_1$ norm and the distances to the unit ball in 1.3 and 2.6 norms\footnote{
    $g = \min (\|\cdot\|_{p}-1, 0)$ for $p$ in $\{1.3, 2.6\}$, the proximity operator of which is $\prox_{\gamma g}(u)$ is $u (1-\gamma/ \|u\|_p)$ if $\|u\|_p > 1+\gamma$, $u/\|u\|_p$ if $1 \le \|u\|_p \le 1+\gamma$, and $u$ otherwise.
}. The natural manifolds of interest for these nonsmooth functions are respectively the set of cartesian axes of $\mathbb R^n$ and the unit spheres of the 1.3 and 2.6 norm, which we denote by $\mathcal S_{\|\cdot\|_{1.3}}(0,1)$ and $\mathcal S_{\|\cdot\|_{2.6}}(0,1)$. Figure~\ref{fig:identif} highlights three interesting behaviors:
\begin{itemize}
    \item[(1)] Upon reaching a manifold, the inertial term of the accelerated version\review{s} will not be aligned with the manifold in general and thus will have a non null orthogonal component to $\M$. Unless that orthogonal component is small enough, it will cause iterates to miss the manifold and go beyond it. \cref{fig:IstaFistaLASSO} illustrates this point for \review{accelerated proximal gradient over} linear manifolds: the\reviewst{ accelerated} iterates go past the optimal manifold $\M_1$ twice before reaching it definitively, while the proximal gradient iterates identify it directly. \review{\cref{fig:2.6ball} shows the same overshooting behavior of accelerated proximal gradient.} In \cref{fig:1.3ball}\reviewst{ and \ref{fig:2.6ball}}, while proximal gradient iterates identify the optimal manifold definitively,\reviewst{ accelerated} iterates \review{of both accelerated versions} go beyond it, only to reach it again after several iterations.
    \item[(2)] In the case of a curved (non-affine) manifold $\M$, the interplay between the curvature and the inertial term can cause the iterates to leave the manifold. In \cref{fig:2.6ball},\reviewst{ accelerated} iterates \review{of both accelerated versions} reach $\M$ a first time but leave it after some iterations. It turns out that these iterates do reach $\M$ again, but only to leave it again some time later and have this phenomenon happen periodically (this periodicity can be seen \review{in the zoom of \cref{fig:2.6ball} and numerically for both problems with curved manifolds}\reviewst{numerically in \ref{fig:numexps_2.6ball_subopt}}).
    \item[(3)] Acceleration also has some kind of exploratory behavior that increases the chances to encounter an optimal manifold, which can be helpful with problems not verifying \eqref{eq:QC}. In \cref{fig:2.6ball} where \eqref{eq:QC} does not hold, the\reviewst{ accelerated} iterates \review{of both accelerated versions} reach the optimal manifold, at least for some time, while proximal gradient iterates never does \review{(this phenomenon is also illustrated in \cref{rk:QCfailure_example})}.
\end{itemize}

\section{Identification-promoting Acceleration} \label{sec:identifprom}

Based on these remarks, we introduce in this section a proximal gradient method with \emph{provisional} acceleration. By provisional, we mean that acceleration is carried out as long as it does not jeopardize a fast identification. The general goal of this type of algorithm is to maintain a satisfying practical and theoretical rate \review{by accelerating as soon as possible} while making the iterates stick to the identified structure as much as possible.

We first start by laying out a generic provisionally accelerated proximal gradient and then propose efficient practical tests to determine whether or not to accelerate an iteration.

\subsection{Generic Provisionally Accelerated Proximal Gradient algorithm} \label{subsec:condalg}

Let us denote by $\idtest$ the boolean-valued function that will determine if an iteration should be accelerated or not. It expects as an input some previously computed iterates and returns $1$ if acceleration should be performed, $0$ otherwise. The proposed proximal gradient with provisional acceleration then writes:
\begin{equation*}
    \tag{Prov. Alg.}\label{eq:condalg}
    \left|
    \begin{array}{l}
        y_{k}= \left\{
        \begin{array}{ll}
            x_k + \alpha_k (x_k - x_{k-1}) & \textrm{ if } \idtestargs = 1 \\
            x_k  & \textrm{ otherwise}
        \end{array} \right. \\
        {u_{k+1}=y_{k}-\gamma\nabla f(y_k) } \\
        {x_{k+1}=\prox_{\gamma g}(u_{k+1})} \\
    \end{array}
\right.
\end{equation*}

A general bound on suboptimality can be derived for this algorithm, independently of the value of the test $\idtest$, as stated in the following lemma.

\begin{lemma}\label{thm:general_func_cv}
Let Assumptions \ref{assumption1} and \ref{hyp:alpha} hold and take $\gamma>0$. Then, the iterates of \eqref{eq:condalg} verify
\begin{align}
    \label{eq:base}
    t_n^2 \left[F(x_{n+1})-F^\star\right] \le &- \sum_{k=0}^n \frac{1-\gamma L}{2 \gamma} t_k^2 \|x_{k+1}-y_k\|^2 + \frac{1}{2 \gamma}\|x_0-x^\star\|^2 \\
  \nonumber  & ~~~~ + \sum_{k=1}^n (1-\idtestargs) \frac{1}{2\gamma}\|x_k-x^\star\|^2
\end{align}
\end{lemma}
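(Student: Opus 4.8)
The plan is to adapt the classical estimate-sequence argument behind FISTA, while carefully tracking the two regimes selected by the test $\idtest$. The starting point is the fundamental three-point inequality for one proximal-gradient step. Using that $x_{k+1}=\prox_{\gamma g}(y_k-\gamma\nabla f(y_k))$, the optimality condition of the prox gives $\tfrac1\gamma(y_k-x_{k+1})-\nabla f(y_k)\in\partial g(x_{k+1})$; combining the subgradient inequality for $g$, convexity of $f$, and the descent lemma from $L$-smoothness (\cref{assumption1}(i)), the $\nabla f(y_k)$ terms cancel and one obtains, for every $z$,
\[
F(x_{k+1})-F(z)\le \frac{1}{2\gamma}\left(\|z-y_k\|^2-\|z-x_{k+1}\|^2\right)-\frac{1-\gamma L}{2\gamma}\|x_{k+1}-y_k\|^2.
\]
I would first establish this inequality; it is where \cref{assumption1} is used and it already produces the negative $\|x_{k+1}-y_k\|^2$ terms appearing in \eqref{eq:base}.

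Next I would instantiate this inequality at $z=x_k$ and at $z=x^\star$, take the weighted sum with weights $t_k-1$ and $1$ respectively, and multiply by $t_k$. Writing $\delta_k:=F(x_k)-F^\star\ge 0$, this puts $t_k^2\delta_{k+1}-t_k(t_k-1)\delta_k$ on the left, a term $-t_k^2\tfrac{1-\gamma L}{2\gamma}\|x_{k+1}-y_k\|^2$ on the right, and a bracket of squared norms weighted by $t_k$. The key algebraic identity is that, setting $w_k:=t_k x_{k+1}-(t_k-1)x_k-x^\star$, this weighted bracket equals $\|t_k y_k-(t_k-1)x_k-x^\star\|^2-\|w_k\|^2$. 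Here the test enters decisively: when $\idtestargs=1$ the momentum $y_k=x_k+\tfrac{t_{k-1}-1}{t_k}(x_k-x_{k-1})$ gives the telescoping relation $t_k y_k-(t_k-1)x_k-x^\star=w_{k-1}$, whereas when $\idtestargs=0$ one has $y_k=x_k$, so the same quantity equals $x_k-x^\star$ instead. This is the crux of the argument and the step I expect to be the main obstacle, since it is precisely the broken momentum recursion in the non-accelerated regime that forces the extra error term $\|x_k-x^\star\|^2$ into the bound.

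Finally I would introduce the potential $\Phi_k:=t_{k-1}^2\delta_k+\tfrac{1}{2\gamma}\|w_{k-1}\|^2$. Invoking \cref{hyp:alpha}(i) in the form $t_k(t_k-1)\le t_{k-1}^2$ together with $\delta_k\ge 0$ to replace $t_k(t_k-1)\delta_k$ by $t_{k-1}^2\delta_k$, the two cases above collapse into the single recursion
\[
\Phi_{k+1}-\Phi_k\le -t_k^2\frac{1-\gamma L}{2\gamma}\|x_{k+1}-y_k\|^2+(1-\idtestargs)\frac{1}{2\gamma}\|x_k-x^\star\|^2,
\]
where in the accelerated case the $w_k$ terms telescope exactly and in the non-accelerated case a nonnegative $\|w_{k-1}\|^2$ term is simply discarded. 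The base case $k=0$ is handled directly by applying the three-point inequality at $z=x^\star$ with $y_0=x_0$ and $t_0=1$, giving $\Phi_1\le\tfrac{1}{2\gamma}\|x_0-x^\star\|^2-\tfrac{1-\gamma L}{2\gamma}\|x_1-y_0\|^2$. Summing the recursion for $k=1,\dots,n$, adding the base case, and dropping the nonnegative $\tfrac{1}{2\gamma}\|w_n\|^2$ term on the left so that $\Phi_{n+1}\ge t_n^2\delta_{n+1}=t_n^2[F(x_{n+1})-F^\star]$, yields exactly \eqref{eq:base}.
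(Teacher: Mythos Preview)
Your proposal is correct and follows essentially the same route as the paper: both combine the three-point (descent) inequality at $z=x_k$ and $z=x^\star$ with weights $t_k-1$ and $1$, use \cref{hyp:alpha}(i) in the form $t_k(t_k-1)\le t_{k-1}^2$, and then split according to the value of the test, observing that $t_k y_k-(t_k-1)x_k-x^\star$ equals $w_{k-1}$ in the accelerated case and $x_k-x^\star$ otherwise. The only cosmetic difference is that you package the telescoping into an explicit potential $\Phi_k=t_{k-1}^2\delta_k+\tfrac{1}{2\gamma}\|w_{k-1}\|^2$ and drop the nonnegative $\|w_{k-1}\|^2$ in the non-accelerated step, whereas the paper instead writes a single combined inequality (adding back the $\tfrac{1}{2\gamma}\|w_{k-1}\|^2$ term as an ``error'' in the non-accelerated case) and telescopes directly; the algebra and the handling of the base case $k=0$ via $y_0=x_0$, $t_0=1$ are identical.
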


\begin{proof} We start from the standard accelerated descent inequality (recalled in Lemma~\ref{lem:accdescent} of Appendix~\ref{appendix:functionalcv}), with $v_k := F(x_{k+1})-F^\star$:
    \begin{align} \label{eq:funcCV_genericbound}
        t_{k}^2 v_k &- t_{k-1}^2 v_{k-1} \le  -\frac{1-\gamma L}{2 \gamma} \|t_k x_{k+1}- t_k y_k\|^2  \\
      \nonumber  & ~~~ - \frac{1}{2\gamma}\|t_k x_{k+1} - (t_k-1)x_k -x^\star\|^2 + \frac{1}{2\gamma} \|t_k y_k - (t_k-1)x_k -x^\star\|^2.
    \end{align}

    This equation can be specified to the different extrapolation updates:
    \begin{itemize}
        \item The accelerated update $\idtestargs = 1$ specifies \eqref{eq:funcCV_genericbound} to:
    \begin{align} \label{eq:char_accel}
        t_{k}^2 v_k &- t_{k-1}^2 v_{k-1} \le -\frac{1-\gamma L}{2 \gamma} \|t_k x_{k+1}- t_k y_k\|^2 \\
      \nonumber  & ~~~ - \frac{1}{2\gamma}\|t_k x_{k+1} - (t_k-1)x_k -x^\star\|^2 + \frac{1}{2\gamma} \|t_{k-1} x_k - (t_{k-1}-1)x_{k-1} -y^\star\|^2
    \end{align}
    since the update gives $t_k(y_k-x_k) = (t_{k-1}-1) (x_k-x_{k-1})$.
    \item The proximal gradient update $\idtestargs = 0$ specifies \eqref{eq:funcCV_genericbound} to:
    \begin{align} \label{eq:char_notaccel}
        t_{k}^2 v_k &- t_{k-1}^2 v_{k-1}\le   -\frac{1-\gamma L}{2 \gamma} \|t_k x_{k+1}- t_k y_k\|^2  \\
        \nonumber  & ~~~  - \frac{1}{2\gamma}\|t_k x_{k+1} - (t_k-1)x_k -x^\star\|^2 + \frac{1}{2\gamma} \|x_k -x^\star\|^2
    \end{align}
    since the update is $y_k=x_k$.
    \end{itemize}

    Both \cref{eq:char_accel,eq:char_notaccel} can be summarized, at the cost of introducing some error when acceleration is performed, as:
    \begin{equation*}
        \begin{aligned}
            t_{k}^2 v_k &- t_{k-1}^2 v_{k-1} \le -\frac{1-\gamma L}{2 \gamma} \|t_k x_{k+1}- t_k y_k\|^2 \\
            &- \frac{1}{2\gamma}\|t_k x_{k+1} - (t_k-1)x_k -x^\star\|^2 + \frac{1}{2\gamma} \|t_{k-1} x_k - (t_{k-1}-1)x_{k-1} -x^\star\|^2\\
            &+ (1-\idtestargs) \frac{1}{2\gamma}\|x_k-x^\star\|^2
        \end{aligned}
    \end{equation*}

    A functional error bound can now be deduced, by summing these inequalities up to iteration $n$ and re-arranging terms:
    \begin{equation} \label{eq:subopt_globalsum}
        \begin{aligned}
            t_n^2 [F(x_{n+1}) &-F^\star] \le F(x_1)-F^\star  - \sum_{k=1}^n \frac{1-\gamma L}{2 \gamma} \|t_k x_{k+1}- t_k y_k\|^2 \\
            &- \frac{1}{2\gamma}\|t_n x_{n+1} - (t_n-1)x_n -x^\star\|^2 + \frac{1}{2\gamma} \|x_1-x^\star\|^2\\
            &+ \sum_{k=1}^n (1-\idtestargs) \frac{1}{2\gamma}\|x_k-x^\star\|^2
        \end{aligned}
    \end{equation}
    where $t_0=1$.
    Suboptimality at first iteration can be approximated by applying the descent lemma (Lemma~\ref{lem:descent}) to $(x=x^\star, y=y_0)$:
    \begin{align*}
        F(x_1)-F^\star &\le -\frac{2-\gamma L}{2 \gamma}\|x_1-y_0\|^2 + \frac{1}{\gamma} \langle y_0-x^\star, x_1-y_0 \rangle \\
        &= -\frac{1-\gamma L}{2 \gamma}\|x_1-y_0\|^2 + \frac{1}{2\gamma}\|y_0-x^\star\|^2 - \frac{1}{2\gamma}\| x_1-x^\star\|^2
    \end{align*}

    Finally, recalling that $y_0=x_0$, applying the previous majoration and $- \frac{1}{2\gamma}\|t_n x_{n+1} - (t_n-1)x_n -x^\star\|^2<0$ to \cref{eq:subopt_globalsum} yields the result.
\end{proof}

\review{At this point, we mention that most articles of the literature concerning adaptive acceleration of the proximal gradient are either heuristic \cite{o2015adaptive,ito2017unified,iutzeler2019generic} or include functional monotonicity tests \cite{beck2009fast,li2015accelerated,giselsson2014monotonicity} (a notable exception being alternated inertia \cite{Iutzeler2018} in a slightly different context).}

\review{Since the originality of our approach is to study the joint effect of acceleration and identification, we need to link the functional convergence coming from the analysis of accelerated methods (see Theorem~\ref{thm:general_func_cv}) with the iterates convergence necessary for identification (see Lemma~\ref{lemma:identif}). This can be done rather generally by considering the following error bound. }

\begin{assumption} \label{EB}
There exists $\beta>0$ and $p\ge 1$ such that
$
    F(x) - F^\star \ge \beta ~\mathrm{dist}^p ~ (x, \argmin F)
$
for all $x$.
\end{assumption}

\review{This type of (global Hölderian) error bound is verified for a large class of functions (for instance proper, convex, lower-semicontinuous, and semi-algebraic functions; see \cite{Bolte2007KL}) and no knowledge about the actual values of $\beta$ and $p$ is needed.
Furthermore, it was shown in \cite{bolte2017error} that for convex functions, this error bound implies that $ \dist(0,\partial F(\mathsf{T}_\gamma(x) ))  $ upper-bounds (some power of) $ F(x) - F^\star$, which is often called the Kurdyka-{\L}ojasiewicz property. This enables us to show that Assumption~\ref{EB} implies that $ \|x-\mathsf{T}_\gamma(x) \| \geq B  ~ \dist^{1-\frac{1}{p}} ~(x,\argmin F)$ for some $B>0$ (see Lemma~\ref{lem:boundKL} in  Appendix~\ref{apx:KL}) which will be used in the following results to control the iterates deviation. We refer the reader to Appendix~\ref{apx:KL} for more discussions on Error Bounds, the Kurdyka-{\L}ojasiewicz property, and their joint use.
}

\subsection{Identification Promoting Tests} \label{subsec:tests}

Let us define a set of candidate manifolds $\C = \{\M_1, \ldots, \M_N\}$ onto which identification is of particular importance. We now propose two tests that will promote identification onto these particular subspaces. The tests will be based on whether or not some points belong to a manifold of interest $\M$.

From a numerical perspective, testing if a point belongs to a set (for instance, testing which coordinates are null) is a often delicate issue due to limited machine precision. However, in our context the tested points will always be the output of a proximal operator which enjoys an explicit formulation. The considered manifolds will match this formulation. Manifold testing will thus be done naturally and exactly when computing the proximity operator.

\begin{example}
    When $g = \|\cdot\|_1$ (for instance in $\ell_1$-regularized problems), the structure of interest is the sparsity pattern, that is the collection of coordinates of $x^\star$ which are null. The proximity operator $\prox_{\gamma \|\cdot\|_1}(u)$ returns a point $x$ such that, coordinate-wise, $[\prox_{\gamma \|\cdot\|_1}(u)]_i = 0$ if and only if $u_i \in [-\gamma, \gamma]$ and $x_i = u_i - \gamma ~ \mathrm{sign}~(u_i) \neq 0$ otherwise. It is thus clear that testing if  $[\prox_{\gamma \|\cdot\|_1}(u)]_i = 0$ matches the closed-form expression of the proximity operator for the $\ell_1$ norm.
\end{example}



As we are mainly interested in final identification, it seems natural to do accelerated steps as long as the iterates are far away from the solution. Therefore, we only allow to consider non-accelerated steps when iterates live in the following set:
\begin{align*}
     \mathsf{Z} = \left\{ y : \|\mathcal T_\gamma(y) - y \|^2 \le \secondreview{\zeta} \text{ and } F(\mathcal T_\gamma (y) ) \le F(x_0) \right\},
\end{align*}
\secondreview{where $\zeta$ is any positive constant (in practice, we find that $\zeta = \|\mathcal T_\gamma(x_0) - x_0 \|^2$ is a reasonable choice).}

To determine whether $y_{k-1}\in\mathsf{Z}$ or not in \eqref{eq:condalg}, one just has to check if $\|x_k - y_{k-1} \|^2 \le \secondreview{\zeta}$ and $F(x_k)\le F(x_0)$\footnote{This functional evaluation is actually only needed if the function is sharp with $p=1$ in \cref{EB}. {For instance, this is the case when $f\equiv 0$ and $g(x) = \|x\|_1$, in which case $F(x)=\|x\|_1$ verifies \cref{EB} with $\beta=1, p=1$. In these rather degenerate cases, the proximal gradient converges in a finite number of steps.}}. Thus, it depends only on previously computed iterates and \emph{not} on the outcome of the test at time $k$.

\subsubsection{Test 1: Stopping when reaching}

As noticed in Section~\ref{sec:interplay} point (1), upon identification the momentum term is in general not aligned with the identified subspace. One further accelerated step may cause the next iterate to leave the subspace, while the vanilla proximal gradient would have stayed in it. A first natural method is thus to ``reset'' the inertial term, by performing one non-accelerated step when reaching a new manifold in our candidate set $\C$:

$\idtestargs[1] = 0$ (no acceleration) if and only if $ y_{k-1} \in \mathsf{Z}$ and
\begin{align*}
    \begin{cases}
        x_{k-1}\not\in\M       \\
        x_k\in \M \\
    \end{cases} \text{ for some } \M \in \C .
\end{align*}

Intuitively, acceleration is performed by default if the iterates are too far from optimum and as long as no new structure is identified. This means that we can benefit from the exploratory behavior of acceleration. Then, accelerated iterations will be performed as long as they do not prevent identifying a new manifold. Note that this way, if finite-time identification is possible, iterations should asymptotically all be accelerated. As expected, this method has the same rate of convergence as the accelerated proximal gradient whenever identification is possible, which is proven in the following theorem. \review{The interest of this method lies in its non-asymptotic \emph{identification} behavior, not captured by theory, which should be improved compared with proximal and accelerated proximal gradient.}

\begin{theorem}
\label{th:test1}
Let Assumptions \ref{assumption1}, \ref{hyp:alpha}, and \ref{EB} hold and take $\gamma\in(0, 1/L]$ . Then, the iterates of \eqref{eq:condalg} with test $ \idtest^{1}$ verify
\begin{align*}
   F\left(x_{n+1}\right)-F^\star \le \frac{\|x_0-x^\star\|^2}{2\gamma t_n^2} + \frac{n \mathsf{R}}{2\gamma t_n^2} = \mathcal O\left(\frac{1}{n}\right)
\end{align*}
for any $x^\star\in\argmin_{x\in\mathbb{R}^n} F(x)$ and some $\mathsf{R}>0$.

Furthermore, if Problem~\eqref{eq:pb} has a unique minimizer $x^\star$ and the qualifying constraint \eqref{eq:QC} holds for all $\M\in\C$ such that $x^\star\in\M$ at $u^\star = x^\star - \gamma \nabla f(x^\star)$, then the iterates sequence $(x_k)$ converges, finite-time identification happens, and
\begin{align*}
   F\left(x_{n+1}\right)-F^\star \le \frac{\|x_0-x^\star\|^2}{2\gamma t_n^2} + \frac{K \mathsf{R}}{2\gamma t_n^2}= \mathcal O\left(\frac{1}{n^2}\right).
\end{align*}
for some finite $K>0$.
\end{theorem}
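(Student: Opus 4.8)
The plan is to build on the generic functional bound of Lemma~\ref{thm:general_func_cv} and to control the extra error term produced by the non-accelerated steps. Since $\gamma\in(0,1/L]$ gives $1-\gamma L\ge 0$, the sum $-\sum_{k=0}^n \frac{1-\gamma L}{2\gamma} t_k^2\|x_{k+1}-y_k\|^2$ in \eqref{eq:base} is nonpositive and I would simply drop it, leaving
\[
t_n^2\,[F(x_{n+1})-F^\star] \le \tfrac{1}{2\gamma}\|x_0-x^\star\|^2 + \sum_{k=1}^n (1-\idtestargs[1])\tfrac{1}{2\gamma}\|x_k-x^\star\|^2 .
\]
The key observation is that a summand is nonzero only when the $k$-th step is not accelerated, which by the definition of $\idtest^1$ forces $y_{k-1}\in\mathsf{Z}$ and hence $F(x_k)=F(\mathcal T_\gamma(y_{k-1}))\le F(x_0)$. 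Together with Assumption~\ref{EB} (which keeps the relevant sublevel set bounded), this yields a uniform bound $\mathsf{R}$ on $\|x_k-x^\star\|^2$ over all non-accelerated steps. Bounding the number of such steps crudely by $n$ gives the first displayed inequality, and dividing by $t_n^2\ge C^2 n^2$ (Assumption~\ref{hyp:alpha}) produces the $\mathcal O(1/n)$ rate.

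For the second part I would first upgrade functional convergence to iterate convergence: the already-proven bound $F(x_{n+1})-F^\star=\mathcal O(1/n)\to 0$ combined with Assumption~\ref{EB} and the uniqueness of $x^\star$ gives $\beta\|x_{n+1}-x^\star\|^p\le F(x_{n+1})-F^\star\to 0$, hence $x_n\to x^\star$. Consequently $x_n-x_{n-1}\to 0$; since the inertial coefficients $\alpha_k$ are bounded, $\|y_k-x_k\|\le \alpha_k\|x_k-x_{k-1}\|\to 0$, so $y_k\to x^\star$, and by continuity of $\nabla f$ the proximal inputs $u_k=y_{k-1}-\gamma\nabla f(y_{k-1})$ converge to $u^\star=x^\star-\gamma\nabla f(x^\star)$.

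With $x_k=\prox_{\gamma g}(u_k)\to x^\star=\prox_{\gamma g}(u^\star)$ and \eqref{eq:QC} in force for every $\M\in\C$ containing $x^\star$, Lemma~\ref{lemma:identif2} applies: after some finite index $K$, the membership of $x_k$ is frozen for every $\M\in\C$ (inside those containing $x^\star$, outside the others). For $k>K$ the triggering condition of $\idtest^1$, namely $x_{k-1}\notin\M$ and $x_k\in\M$, can therefore never occur, so every step is accelerated and $1-\idtestargs[1]=0$. The error sum in the displayed bound is thus a finite sum of at most $K$ terms, each $\le\mathsf{R}$, giving $t_n^2[F(x_{n+1})-F^\star]\le \tfrac{1}{2\gamma}\|x_0-x^\star\|^2+\tfrac{K\mathsf{R}}{2\gamma}$; dividing by $t_n^2\ge C^2n^2$ produces the $\mathcal O(1/n^2)$ rate.

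The main obstacle is showing that only finitely many non-accelerated steps occur, which is exactly what pins the error term to a constant and recovers the accelerated rate. The delicate point is that this rests on finite-time identification (Lemma~\ref{lemma:identif2}), which itself needs convergence of the iterates — a property that is typically hard to obtain for mixed accelerated/non-accelerated schemes via Féjèr-type arguments. The clean way around this, which I would emphasize, is that iterate convergence here is not proved directly but deduced essentially for free from the first-part functional rate through the error bound and the uniqueness of the minimizer; everything else is bookkeeping on the definition of the test.
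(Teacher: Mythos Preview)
Your proposal is correct and follows essentially the same route as the paper's proof: drop the nonpositive term in \eqref{eq:base}, bound $\|x_k-x^\star\|^2$ uniformly on non-accelerated steps using $y_{k-1}\in\mathsf{Z}$ together with Assumption~\ref{EB}, and then for the second part deduce $x_k\to x^\star$ from the first-part rate and the error bound under uniqueness, apply Lemma~\ref{lemma:identif2}, and conclude that $\idtest^1$ returns $0$ only finitely often. Your justification of $y_k\to x^\star$ via $x_k-x_{k-1}\to 0$ and boundedness of $(\alpha_k)$ is slightly more explicit than the paper's, but the argument is the same.
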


\begin{proof}
    First, since at any iteration $k$ for which the test returned 0, we have  $\| x_k - y_{k-1} \| \le \secondreview{\zeta} $ and $  F(x_k) \le F(y_0)$ as $y_{k-1}\in\mathsf{Z}$ and thus Assumption~\ref{EB} tells us that $\|x_k-x^\star\|^2$ is bounded by some constant $\mathsf{R}$ (see Lemma~\ref{lem:boundKL} in Appendix~\ref{apx:KL} for a proof). Dropping the first term of the right hand side of \eqref{eq:base} and using the linear minoration of $t_n$ from Assumption~\ref{hyp:alpha}, we get the first result.

    Then, if there is a unique minimizer, the error bound of Assumption~\ref{EB} along with the first part of the result tells us that $x_k \to x^\star$ and thus $y_k \to x^\star$ and $u^k\to x^\star - \gamma \nabla f(x^\star)$. \reviewst{Using the qualifying constraint \eqref{eq:QC} we get for all final manifolds ({i.e.} all $\M\in\C$ such that $x^\star\in\M$) that $x_k\in\M$ after some finite time by Lemma~\ref{lemma:identif}.}\review{Since the qualifying constraint \eqref{eq:QC} holds for all final manifolds ({i.e.} all $\M\in\C$ such that $x^\star\in\M$), it follows from Lemma~\ref{lemma:identif2} that $x_k\in\M$ after some finite time. Furthermore, for the other manifolds ({i.e.} all $\M\in\C$ such that $x^\star\notin\M$), the same lemma gives us that $x_k\not\in\M$ after some finite time.}
    All in all, this means that the test $\idtest^{1}$ will produce non accelerated iterates only a finite number of times $K$ which gives the second part of the result.
\end{proof}

\subsubsection{Test 2: Prospective reach}

Another method to deal with the negative effects of inertia when the additional term is misaligned with the local manifold is to compute one proximal gradient step forward to investigate which structure can be expected from the next iterate. Intuitively, if the iterate obtained after acceleration is at least as structured as the non-accelerated one, it is kept, otherwise the point obtained after the simple proximal gradient step is taken. This is done in order to counteract both issues (1) and (2) mentioned in Sec.~\ref{sec:interplay} \review{while still benefiting from the exploratory behavior of acceleration (see point (3) in Sec.~\ref{sec:interplay})}.


$\idtestargs[2] = 0$ (no acceleration) if and only if $ y_{k-1} \in \mathsf{Z} $ and
\begin{align*}
    \begin{cases}
        \mathcal T_\gamma (x_{k}) \in\M       \\
        \mathcal T_\gamma (x_{k}+\alpha_{k}(x_{k}-x_{k-1})) \not\in\M \\
    \end{cases} \text{ for some } \M \in \C .
\end{align*}

This approach is further motivated by the desirable retraction property of the proximal (gradient) operator (see e.g. \cite[Th.~28]{DanidilisMalick2006geometryprox}).  However, a drawback of this test is the necessity to compute two proximal gradient steps (for the accelerated and the non-accelerated point) much like monotone versions of FISTA (MFISTA \cite{beck2009fast} and Monotone APG \cite{li2015accelerated}).

Similarly to \eqref{eq:condalg} with test $\idtest^1$, we expect $\idtest^2$ to provide at least a convergence similar to that of proximal gradient, and in cases when identification is possible, equivalent to that of accelerated proximal gradient.

\begin{theorem}
    \label{th:test2}
    Let Assumptions \ref{assumption1}, \ref{hyp:alpha}, and \ref{EB} hold and take $\gamma\in(0, 1/L]$. Then, the iterates of \eqref{eq:condalg} with test $ \idtest^{2}$ verify
    \begin{align*}
       F\left(x_{n+1}\right)-F^\star \le \frac{\|x_0-x^\star\|^2}{2\gamma t_n^2} + \frac{n \mathsf{R}}{2\gamma t_n^2} = \mathcal O\left(\frac{1}{n}\right)
    \end{align*}
    for any $x^\star\in\argmin_{x\in\mathbb{R}^n} F(x)$ and some $\mathsf{R}>0$.

    Furthermore, if Problem~\eqref{eq:pb} has a unique minimizer $y^\star$ and the qualifying constraint \eqref{eq:QC} holds for all $\M\in\C$ such that $x^\star\in\M$ at $u^\star = x^\star - \gamma \nabla f(x^\star)$, then the iterates sequence $(x_k)$ converges, finite-time identification happens, and
    \begin{align*}
       F\left(x_{n+1}\right)-F^\star \le \frac{\|x_0-x^\star\|^2}{2\gamma t_n^2} + \frac{K \mathsf{R}}{2\gamma t_n^2}= \mathcal O\left(\frac{1}{n^2}\right)
    \end{align*}
    for some finite $K>0$.
\end{theorem}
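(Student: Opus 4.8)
The plan is to follow the proof of Theorem~\ref{th:test1} almost verbatim, since the two theorems have identical conclusions and the only structural ingredient that changes is the identification-promoting test. The first, $\mathcal{O}(1/n)$, estimate uses nothing about the test except that a non-accelerated step is taken only when $y_{k-1}\in\mathsf{Z}$; I would therefore reproduce that part directly. Concretely, whenever $\idtestargs[2]=0$ we have $y_{k-1}\in\mathsf{Z}$, so $\|x_k-y_{k-1}\|^2=\|\mathcal T_\gamma(y_{k-1})-y_{k-1}\|^2\le\zeta$ and $F(x_k)\le F(x_0)$; Assumption~\ref{EB} together with Lemma~\ref{lem:boundKL} then bounds $\|x_k-x^\star\|^2$ by a constant $\mathsf{R}$ uniformly over all such $k$. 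Plugging this into the generic bound~\eqref{eq:base} of Lemma~\ref{thm:general_func_cv}, dropping the nonpositive first sum, and using $t_n\ge Cn$ from Assumption~\ref{hyp:alpha} gives the first inequality.

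For the $\mathcal{O}(1/n^2)$ rate the strategy is to show that test $\idtest^2$ returns $0$ only finitely many times, say $K$ times, so that the error term $\sum_{k=1}^n(1-\idtestargs[2])\,\|x_k-x^\star\|^2$ in~\eqref{eq:base} is capped by $K\mathsf{R}$ independently of $n$; the linear lower bound $t_n\ge Cn$ then upgrades the rate to $1/n^2$. To establish finiteness, uniqueness of the minimizer combined with the error bound and the first part forces $x_k\to x^\star$. Since $x_k\to x^\star$ implies $\|x_k-x_{k-1}\|\to0$ and the inertial coefficients $(\alpha_k)$ are bounded by Assumption~\ref{hyp:alpha}, the candidate extrapolated point $\tilde y_k:=x_k+\alpha_k(x_k-x_{k-1})$ also converges to $x^\star$. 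Hence both arguments fed to the test, $x_k$ and $\tilde y_k$, converge to $x^\star$, and by continuity of $\mathcal T_\gamma$ so do their images $\mathcal T_\gamma(x_k)\to x^\star$ and $\mathcal T_\gamma(\tilde y_k)\to x^\star$.

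The only genuinely new point, relative to Test~1, is that I must control these two forward-looking proximal-gradient images simultaneously through the qualifying constraint. For a final manifold $\M$ (one with $x^\star\in\M$), the inputs $x_k-\gamma\nabla f(x_k)$ and $\tilde y_k-\gamma\nabla f(\tilde y_k)$ both converge to $u^\star=x^\star-\gamma\nabla f(x^\star)$, so \eqref{eq:QC} forces both $\mathcal T_\gamma(x_k)\in\M$ and $\mathcal T_\gamma(\tilde y_k)\in\M$ after a finite time; this falsifies the second clause $\mathcal T_\gamma(\tilde y_k)\notin\M$ of the test. For a non-final manifold $\M$ (one with $x^\star\notin\M$), Lemma~\ref{lemma:identif2}(ii) applied to $\mathcal T_\gamma(x_k)\to x^\star\notin\M$ gives $\mathcal T_\gamma(x_k)\notin\M$ after a finite time, which falsifies the first clause. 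As $\C$ is finite, beyond some finite index no manifold can trigger $\idtest^2=0$, so acceleration is performed at every subsequent iteration and non-accelerated steps occur at most $K$ times.

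I expect the principal obstacle to be the rigorous justification that the accelerated forward image $\mathcal T_\gamma(\tilde y_k)$ is captured by the qualifying constraint, which hinges on the convergence $\tilde y_k\to x^\star$. This requires combining $\|x_k-x_{k-1}\|\to0$ with the boundedness of $(\alpha_k)$ and then passing the limit through $\nabla f$ and $\prox_{\gamma g}$ via their continuity; the remainder is a faithful transcription of the Test~1 argument, now invoking the multiple-identification Lemma~\ref{lemma:identif2} in both directions to certify membership in the final manifolds and non-membership in all others.
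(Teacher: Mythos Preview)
Your proposal is correct and follows essentially the same approach as the paper, whose proof simply reads ``The proof is the same as the one of \cref{th:test1}. Indeed, like $\idtest^1$, $\idtest^2$ is such that i) the test can return $0$ only for bounded iterates; and ii) as soon as identification happens, the test returns $1$.'' You have merely unpacked point ii) with the appropriate care for Test~2, correctly noting that here one must apply the qualifying constraint and Lemma~\ref{lemma:identif2} to the forward images $\mathcal T_\gamma(x_k)$ and $\mathcal T_\gamma(\tilde y_k)$ rather than to the iterates $x_k$ themselves; the needed convergence $\tilde y_k\to x^\star$ indeed follows from $x_k\to x^\star$ together with the boundedness of $(\alpha_k)$ implied by Assumption~\ref{hyp:alpha}.
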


\begin{proof}
    The proof is the same as the one of \cref{th:test1}. Indeed, like $\idtest^1$, $\idtest^2$ is such that i) the test can return $0$ only for bounded iterates; and ii) as soon as identification happens, the test returns $1$ (i.e. acceleration).
\end{proof}

\subsection{Linear convergence rate under local restricted strong convexity} \label{subsec:lincv}

It has been observed in the literature \cite{liang2015activityDRADMM,pmlr-v80-poon18a,liang2017localcvFBtype} that algorithms showing finite time identification generally benefit from a local linear convergence property under some additional assumptions. It is also the case for \eqref{eq:condalg} with both tests.

\begin{corollary}[Linear convergence] \label{lem:lincv}
Let Assumptions \ref{assumption1}, \ref{hyp:alpha}, and \ref{EB} hold, take $\gamma\in(0, 1/L]$ and $\M$ a manifold.  Assume in addition that Problem~\eqref{eq:pb} has a unique minimizer $x^\star$, where $g$ is partly smooth relative to $\M$ and the non-degeneracy condition $-\nabla f(x^\star) \in \ri g(x^\star)$ holds. Then, \eqref{eq:condalg} equipped with either test $\idtest^{1}$ or $\idtest^{2}$ generates a sequence $(x_k)$ such that
\begin{itemize}
    \item after some finite time $K_1$, identification happens: $ \forall k\ge K_1, \quad x_k\in\M,$
    \item after some finite time $K_2>K_1$, $R$-linear convergence happens: there exists some $\alpha\in(0,1)$ and $C>0$ such that
    $$
    \forall k\ge K_2, \quad \|x_{k}-x^\star\| \le  C \alpha^{k} \|x_{K_2}-x^\star\| .
    $$
\end{itemize}
\end{corollary}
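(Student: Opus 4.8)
The plan is to split the argument into the two announced claims: establish finite-time identification first, then use it to reduce \eqref{eq:condalg} to a pure accelerated proximal gradient to which a local spectral analysis applies.

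For the first bullet (identification, producing $K_1$), I would invoke the Remark on the qualifying constraint: since $g$ is partly smooth relative to $\M$ and the non-degeneracy condition $-\nabla f(x^\star)\in\ri\partial g(x^\star)$ holds at $u^\star = x^\star-\gamma\nabla f(x^\star)$, the qualifying constraint \eqref{eq:QC} is satisfied for $\M$. With a unique minimizer and Assumption \ref{EB}, Theorem \ref{th:test1} (resp. \ref{th:test2}) guarantees that the iterates of \eqref{eq:condalg} with test $\idtest^1$ (resp. $\idtest^2$) converge to $x^\star$, so that $u_k\to u^\star$; Lemma \ref{lemma:identif} then yields a finite $K_1$ with $x_k\in\M$ for all $k\ge K_1$.

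The second step is to show that acceleration is eventually always performed. As in the proof of Theorem \ref{th:test1}, each test can return $0$ only finitely often: for $\idtest^1$ the branch ``$x_{k-1}\notin\M$'' fails once identification has occurred, while for $\idtest^2$ the convergence $x_k,x_{k-1}\to x^\star$ forces the extrapolated point and its proximal-gradient image into the region where \eqref{eq:QC} applies, so that $\mathcal T_\gamma(x_k+\alpha_k(x_k-x_{k-1}))\in\M$ and the test returns $1$. Hence there is a finite $K_2\ge K_1$ after which every iteration of \eqref{eq:condalg} is accelerated and every iterate lies on $\M$; for $k\ge K_2$ the recursion coincides exactly with the accelerated proximal gradient \eqref{alg:AccelPG}.

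The remaining and most delicate step is the local linear convergence of this confined accelerated dynamics. Partial smoothness of $g$ relative to $\M$ makes $F|_\M$ locally smooth near $x^\star$, and I would use the local restricted strong convexity — the positive-curvature input underlying this subsection, guaranteed by partial smoothness together with uniqueness and the non-degeneracy condition — namely positive definiteness of the Riemannian Hessian of $F|_\M$ at $x^\star$. I would then linearize the proximal-gradient operator restricted to $\M$ around $x^\star$, writing the accelerated update for $k\ge K_2$ as a two-step linear recursion in $x_k-x^\star$ whose companion matrix, built from the restricted Jacobian of $\mathcal T_\gamma$ (a strict contraction since $\gamma\in(0,1/L]$ and the restricted curvature is positive) and the inertial coefficients $\alpha_k$, is stable. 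A standard spectral/Lyapunov argument on this recursion, as in the local convergence analysis of FISTA-type methods \cite{liang2017localcvFBtype}, then delivers constants $\alpha\in(0,1)$ and $C>0$ with $\|x_k-x^\star\|\le C\alpha^k\|x_{K_2}-x^\star\|$, which is the claimed $R$-linear rate. I expect the main obstacle to be precisely this last step: securing the restricted strong convexity from the stated hypotheses and controlling the linearized recursion, which is non-autonomous because $\alpha_k\to 1$, so that the geometry of partial smoothness and the spectral analysis of the inertial scheme must be combined carefully.
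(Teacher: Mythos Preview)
Your proposal is correct and follows essentially the same route as the paper: derive \eqref{eq:QC} from partial smoothness plus non-degeneracy, invoke Theorems~\ref{th:test1}/\ref{th:test2} for convergence and finite-time identification, observe that both tests eventually always return acceleration so the tail coincides with \eqref{alg:AccelPG}, and then appeal to the local linearization of \cite{liang2017localcvFBtype} for the spectral contraction. The obstacle you flag is resolved exactly as in \cite[Prop.~4.5, Cor.~4.9, Rem.~4.10]{liang2017localcvFBtype}: the non-autonomous part of the inertial recursion is absorbed into a small-$o$ error term so that $d_{k+1}=Md_k+e_k$ with a \emph{fixed} companion matrix $M$ of spectral radius $\rho(M)<1$, after which the paper finishes with an explicit perturbed-geometric-series induction (bounding a rescaled sequence uniformly) to extract the $R$-linear rate.
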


\begin{proof}
See \cref{sec:prooflincv}.
\end{proof}

\section{Numerical experiments} \label{sec:numexps}

In this section, we first show how the proposed methods can overcome the issues presented on test cases in \cref{sec:interplay}. Then, we illustrate the improved identification properties of these methods on typical machine learning/signal processing objectives.
The code used for the experiments is written in Julia \cite{bezanson2017julia} and is available at {\small \url{https://github.com/GillesBareilles/Acceleration-Identification}}.

\subsection{Application to the initial test cases}

We now return on the test cases presented in \cref{sec:interplay}, and show the iterates trajectories and suboptimality evolution along with the time of identification. For a fair comparison between \eqref{eq:condalg} with test $\idtest^2$ and the other algorithms, we plot the suboptimality versus the number of proximal gradient steps (equal to the number of iterations for all algorithms except with test $\idtest^2$ which performs two proximal gradient steps per iteration). The moment of identification of the final structure is denoted by the symbol $\oplus$ on the suboptimality plots.

In \cref{fig:numexps_lasso_full}, the $\ell_1$ norm is taken as a nonsmooth function and the candidate (linear) manifolds are the cartesian axes. Both tests allow to identify in finite time, and prevent issue (1) of Section~\ref{sec:interplay}.

In \cref{fig:numexps_1.3ball_full} and \cref{fig:numexps_2.6ball_full}, the candidate (curved) manifolds are respectively the $1.3$-norm and $2.6$-norm unit sphere. Test $\idtest^2$ allows to get finite identification, while $\idtest^1$ and accelerated proximal gradient struggle in doing so. Furthermore, the algorithm based on test $\idtest^2$ identifies the manifold as soon as one of its iterates belong to it, as opposed to the accelerated proximal gradient or $\idtest^1$.

All in all, we observe that test $\idtest^2$ corrects the problems noted in \cref{sec:interplay} on these three examples. We advocate the use of $\idtest^2$ when identifying the structure is most important. If reaching a high precision solution is the primary objective, we recommend to use test $\idtest^1$, for which each iteration is as costly as an accelerated proximal gradient one.

\begin{figure}
    \centering
    \begin{subfigure}[t]{0.49\textwidth}
        \centering
        \includegraphics[width=\textwidth]{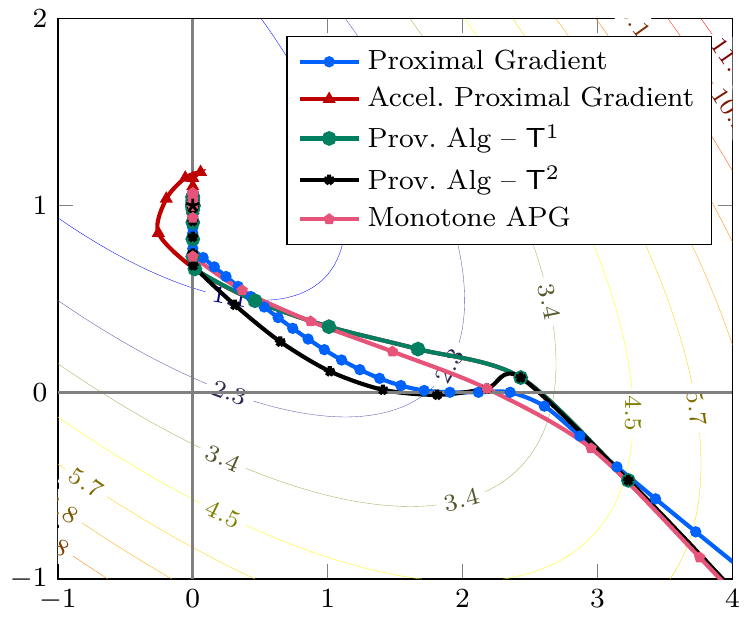}
        \caption{Iterates behavior\label{fig:numexps_LASSO_iter}}
    \end{subfigure}\hfill
    \begin{subfigure}[t]{0.49\textwidth}
        \centering
        \includegraphics[width=\textwidth]{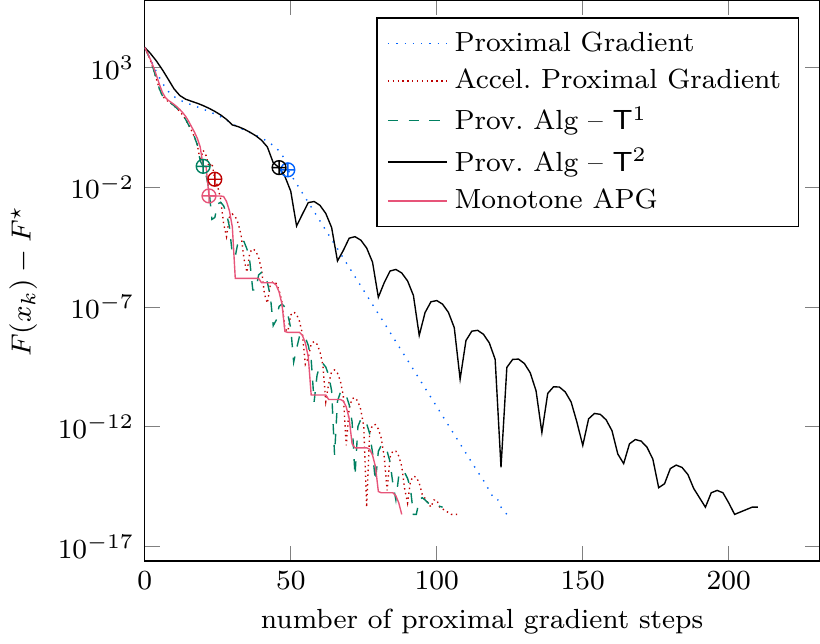}
        \caption{Suboptimality \label{fig:numexps_LASSO_subopt}}
    \end{subfigure}

    \caption{$F(x) = \|Ax-b\|^2 + g(x)$  with $g(x) = \|x\|_1$; $\M_1 = y$-axis and $\M_2 = x$-axis \label{fig:numexps_lasso_full}}
\end{figure}

\begin{figure}
    \centering
    \begin{subfigure}[t]{0.49\textwidth}
        \centering
        \includegraphics[width=\textwidth]{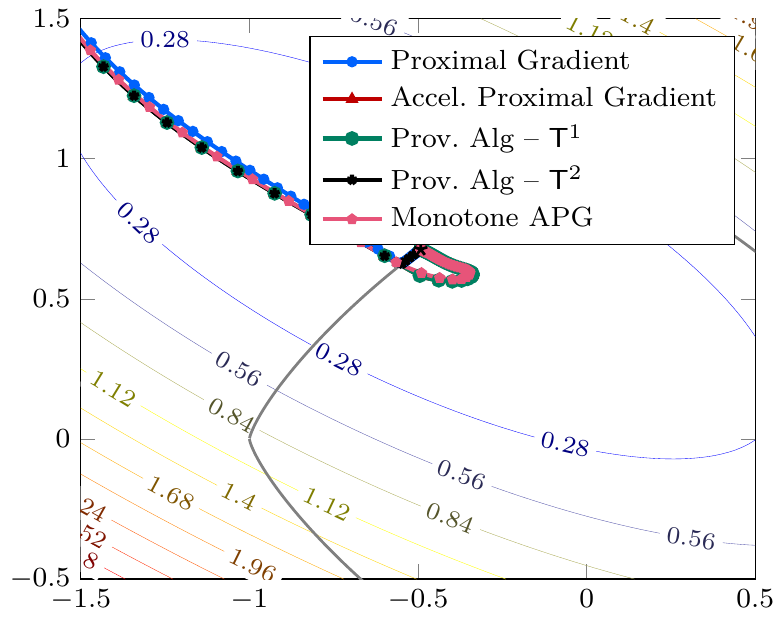}
         \caption{Iterates position\label{fig:numexps_1.3ball_iter}}
     \end{subfigure}
     \hfill
     \begin{subfigure}[t]{0.49\textwidth}
        \centering
        \includegraphics[width=\textwidth]{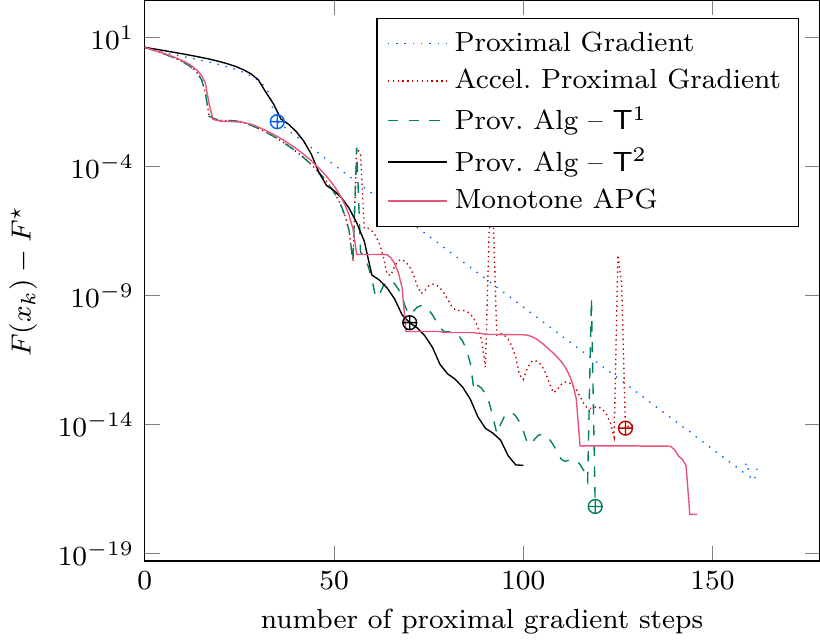}
        \caption{Suboptimality \label{fig:numexps_1.3ball_subopt}}
     \end{subfigure}

    \caption{$F(x) = \|Ax-b\|^2 + g(x)$  with  $g(x) =  \max(0, \|x\|_{1.3}-1)$; $\M = \mathcal S_{\|\cdot\|_{1.3}}(0, 1)$ \label{fig:numexps_1.3ball_full}}
\end{figure}

\begin{figure}
     \centering
     \begin{subfigure}[t]{0.49\textwidth}
         \centering
        \includegraphics[width=\textwidth]{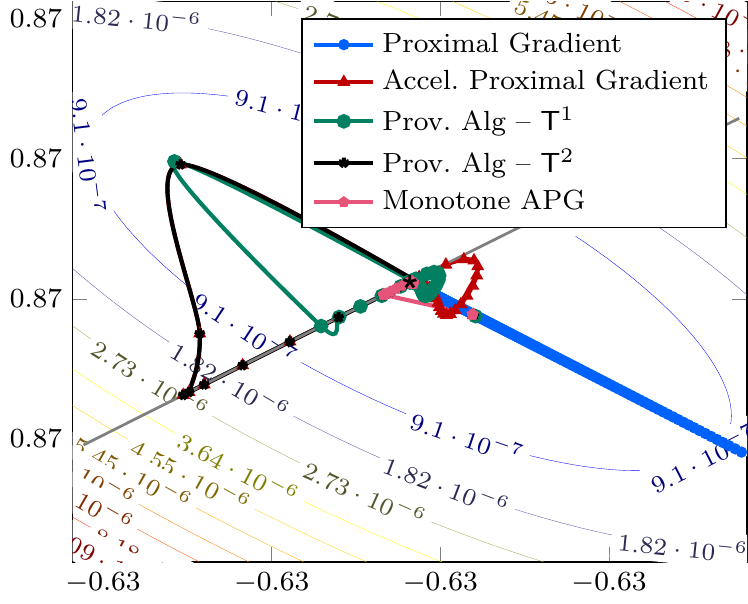}
         \caption{Iterates position\label{fig:numexps_2.6ball_iter}}
     \end{subfigure}\hfill
     \begin{subfigure}[t]{0.49\textwidth}
        \centering
        \includegraphics[width=\textwidth]{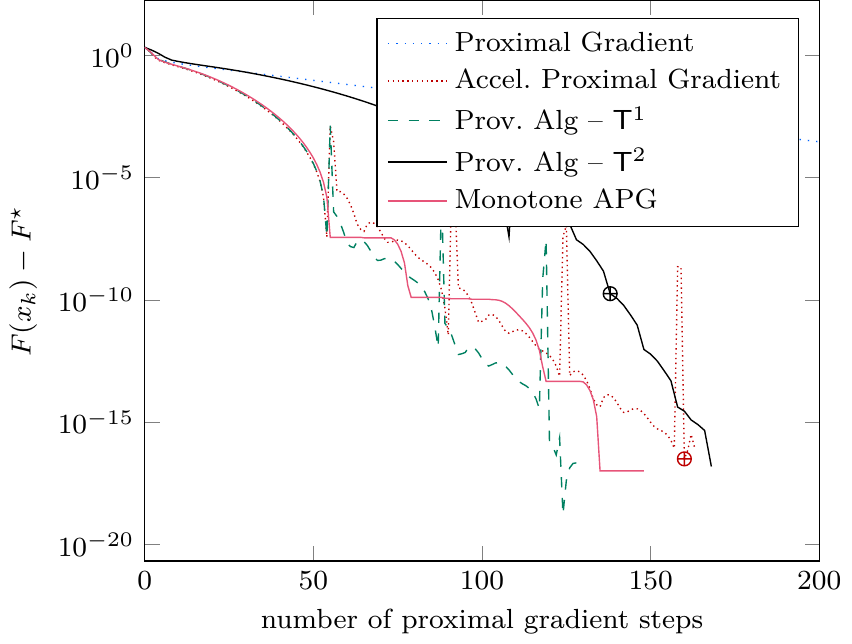}
        \caption{Suboptimality \label{fig:numexps_2.6ball_subopt}}
     \end{subfigure}
    \caption{$F(x) = \|Ax-b\|^2 + g(x)$  with  $g(x) =  \max(0, \|x\|_{2.6}-1)$; $\M = \mathcal S_{\|\cdot\|_{2.6}}(0, 1)$ \label{fig:numexps_2.6ball_full}}
\end{figure}

\subsection{Behavior for other nonsmooth structures}\label{sec:numexps_batchtests}

In this section, we illustrate the behavior of the algorithms in terms of identification. In order to do so, we generate and solve several instances of (non-strongly convex) least squares regularized problems with all algorithms. We then display \emph{per iteration} the number of final manifolds identified by the current iterate.

We consider composite problems of the form
\begin{equation*}
    \min_{x} \|Ax-b\|_2^2 + \lambda g(x)
\end{equation*}
where
\begin{itemize}
    \item $A\in\mathbb R^{m\times n}$ is a random matrix whose coefficients follow a centered reduced normal distribution;
    \item $b = As+e$ where $s$ is a structured random vector, and $e\in\mathbb R^m$ is an error term. $e$ follows a centered \reviewst{normal}\review{gaussian} distribution of variance \reviewst{$\sigma^2$ with $\sigma = 0.01$}\review{$\delta^2$ with $\delta = 0.01$};
    \item $\lambda = \delta$ so that the composite problem has the same structure as $s$ (see \cite[\reviewst{Th. 4}\review{Th. 1}]{2017-vaiter-ps-consistency}).
\end{itemize}

The considered regularizers, along with corresponding problems parameters and structure of initial signal $s$ are detailed in \cref{array:numexpspbdetail}. \reviewst{For all algorithms, the initial point $x_0$ is selected randomly.}\secondreview{For the first problem the initial point $x_0$ is drawn uniformly in $[0, 10]^n$, while for the second and third problems the entries of the initial matrix are drawn independently following the normal distribution.}

\begin{table}[h]
    \begin{tabular*}{\textwidth}{@{\extracolsep{\fill}} |cccc|}
        \hline
        Figure & nonsmooth term $g$ & Problem size & Structure to recover\\ \hline
        \cref{fig:reg_l1_randinit} & $\ell_1$ norm/lasso & $n=128$, $m=60$ & 120 null entries \\
        \cref{fig:reg_lnuclear_randinit_mean} & \secondreview{nuclear norm} & \secondreview{$n=6\times 7$, $m=2\times 2$} & \secondreview{rank 3} \\
        \cref{fig:reg_lnuclear_randinit} & nuclear norm & $n=20\times 20$, $m=16\times 16$ & rank 3 \\
        \hline
    \end{tabular*}
    \caption{Figure references and corresponding problem parameters.\label{array:numexpspbdetail}}
\end{table}


Globally, we observe that the moment of identification happens roughly at the same time for the accelerated and proposed algorithms while the vanilla proximal gradient takes more time. This justifies the use of acceleration in the first steps in order to explore correctly the search space.



Between accelerated proximal gradient and the proposed methods, \secondreview{the conclusions vary depending on whether the identified manifolds are curved or flat. \Cref{fig:reg_l1_randinit} illustrates the flat manifolds case, in which} identification happens \secondreview{soonest} with accelerated proximal gradient\secondreview{. However}, this method often looses the identified manifolds some time before complete identification (see near iteration $4.10^3$), while the proposed methods appear to identify in a more monotonous way.

\secondreview{
    The case of curved manifolds is illustrated in \cref{fig:reg_lnuclear_randinit_mean}, which shows the average percentage of identified manifolds over 30 least-square problems with nuclear norm regularization. While the proximal gradient converges too slowly to show any identification in the considered iterations range, all the other methods eventually identify the correct structure. The accelerated proximal gradient does so less efficiently in average than both proposed methods, and test $\idtest^2$ performs best both in terms of identification time and stability.
}

\secondreview{\Cref{fig:reg_lnuclear_randinit} shows a larger instance of nuclear-norm regularized least squares, in which} this stability appears even more. Indeed, while the number of correctly identified manifolds increases almost monotonically for $\idtest^2$, accelerated proximal gradient and $\idtest^1$ seem to lose all structure upon identifying a new manifold. This means that if one stops all algorithms at a $10^{-3}$ suboptimality, almost no structure is recovered for the accelerated proximal gradient, while test $\idtest^1$ and even more $\idtest^2$ are able to recover half the structure of the original signal.

\secondreview{
    All in all, the identification performance among the algorithms depends largely on whether the manifold is flat or has curvature. This can be partially explained recalling the three observations drawn on the interplay between acceleration an identification (see Section \ref{sec:interplay}). Only two apply for flat manifolds: the negative interplay of inertia misalignment -- point (1), and the positive interplay induced by the exploratory behavior of acceleration -- point (3). However, when dealing with curved manifolds the linear extrapolation of acceleration inertia naturally yields a point that does not lie on the manifold anymore -- point (2). This additional interplay explains to some extent the instability of accelerated proximal gradient noticed for curved manifolds in \cref{fig:reg_lnuclear_randinit}, and gives an intuition as to why $\idtest^2$ performs better in that case.
}

\begin{figure}
    \centering
    \begin{subfigure}[t]{0.49\textwidth}
        \centering
        \includegraphics[width=\textwidth]{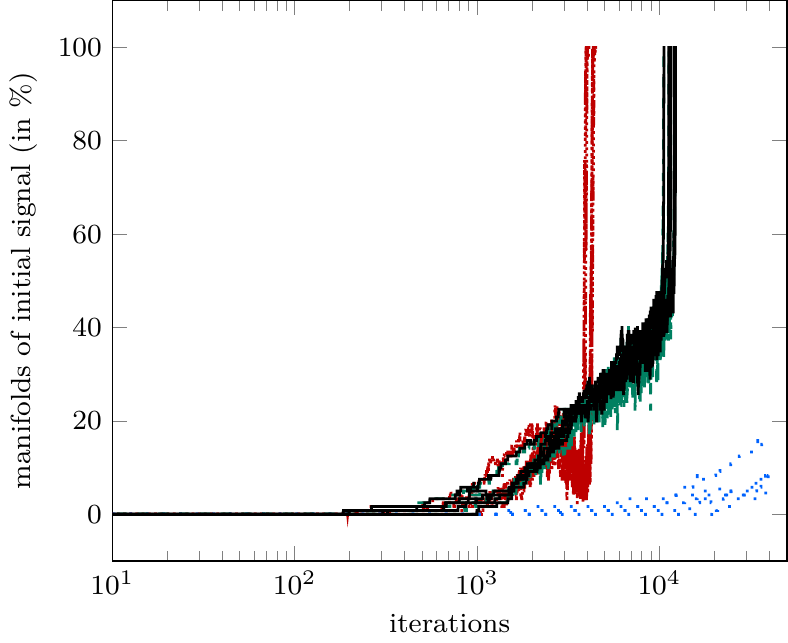}
        \caption{$g=\|\cdot\|_1$ -- 5 runs w/ random initialization -- 120 manifolds to identify\label{fig:reg_l1_randinit}}
    \end{subfigure}\hfill
    \begin{subfigure}[t]{0.49\textwidth}
        \centering
        \includegraphics[width=\textwidth]{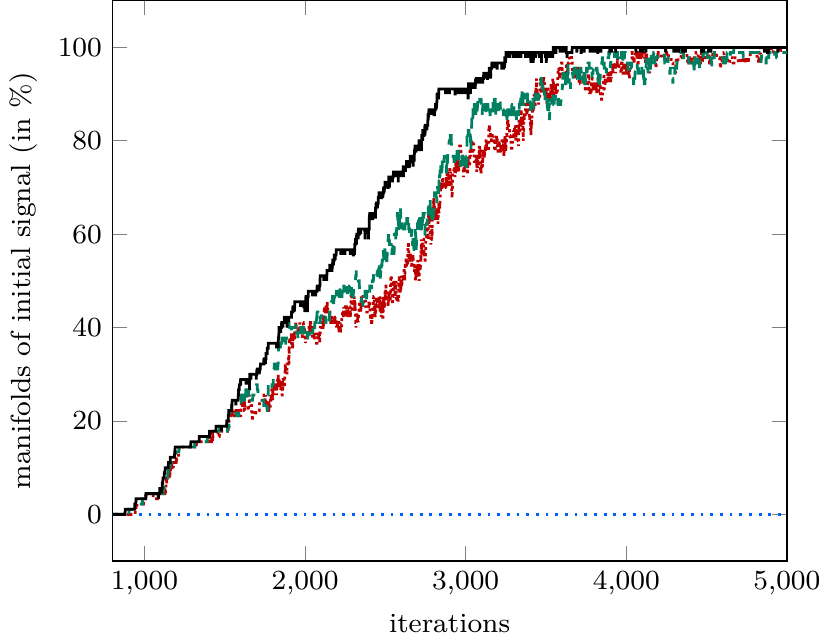}
        \caption{\secondreview{$g=\|\cdot\|_{*}$ -- average over 30 runs w/ random initialization -- 3 manifolds to be identify}\label{fig:reg_lnuclear_randinit_mean}}
    \end{subfigure} \\
    \begin{subfigure}[t]{0.49\textwidth}
        \centering
        \vspace{0.2cm}
        \includegraphics[width=0.7\textwidth]{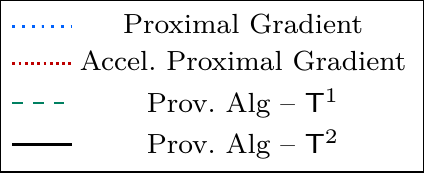}
    \end{subfigure}
    \caption{\secondreview{Identified subspaces versus number of iterations on linear (left) and curved (right) manifolds.}\label{fig:reg_l1_l12}}
\end{figure}

\begin{figure}
    \centering
    \begin{subfigure}[t]{0.49\textwidth}
        \centering
        \includegraphics[width=\textwidth]{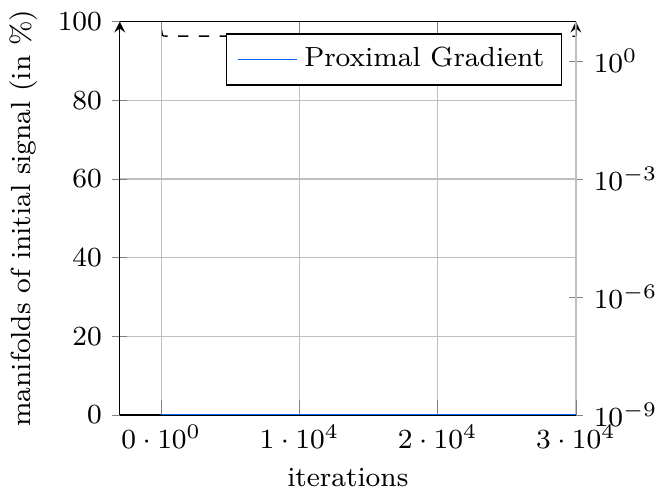}
    \end{subfigure}\hfill
    \begin{subfigure}[t]{0.49\textwidth}
        \centering
        \includegraphics[width=\textwidth]{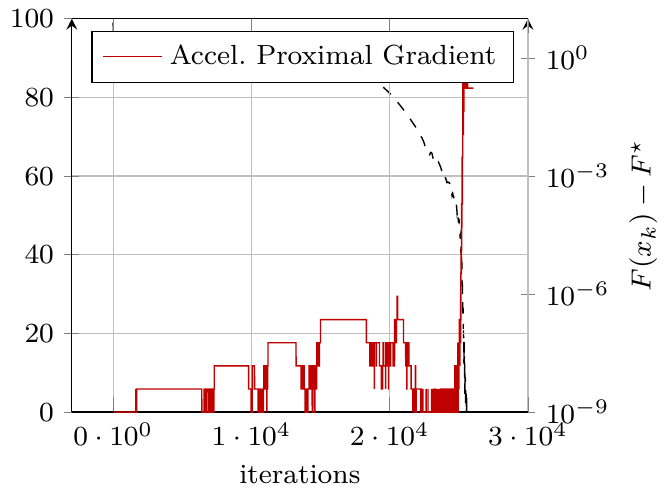}
    \end{subfigure}\\

    \begin{subfigure}[t]{0.49\textwidth}
        \centering
        \includegraphics[width=\textwidth]{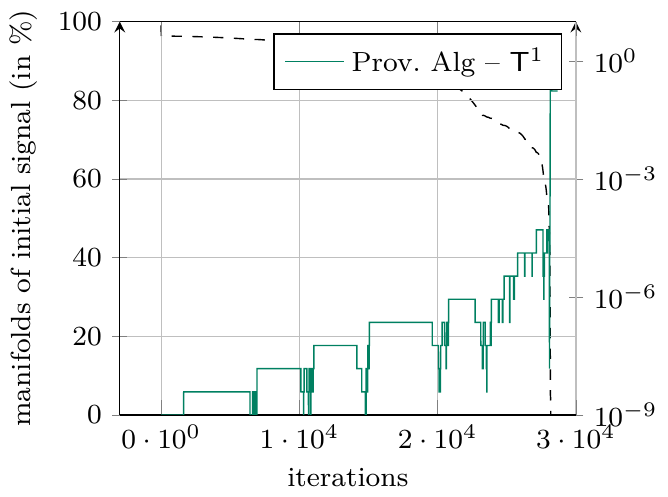}
    \end{subfigure}\hfill
    \begin{subfigure}[t]{0.49\textwidth}
        \centering
        \includegraphics[width=\textwidth]{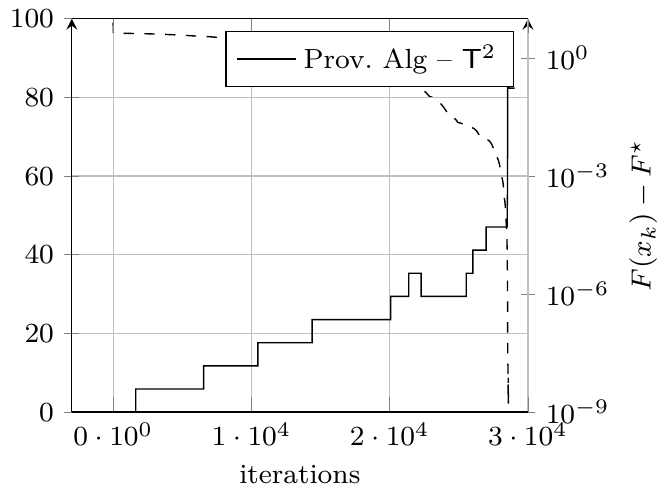}
    \end{subfigure}

    \caption{$g=\|\cdot\|_*$ -- 1 run w/ random initialization -- 17 manifolds to identify\label{fig:reg_lnuclear_randinit}}
\end{figure}


\section{Conclusion}

\review{We established that acceleration can interfere with identification for the proximal gradient algorithm, sometimes positively, sometimes negatively. However, we showed that it is possible to counteract the negative effects of acceleration on the iterates structure by not accelerating certain iterations. From this observation, we proposed two methods exhibiting a stable identification behavior while maintaining an accelerated convergence rate both in theory and in practice.}
\review{As for future directions, we mention the possiblity of extending our reasonings to more general primal-dual methods (such as ADMM, Chambolle-Pock, etc.); this would be particularly interesting in the case where two different proximity operators are involved leading to two different identification tracks.  }

\review{
\section*{Acknowledgments}

The authors would like to thank the AE and the two anonymous reviewers for their indications about the tone of the article and their suggestions for future works.
FI benefited from the support of the ANR JCJC project \emph{STROLL} (ANR-19-CE23-0008).
}

\ifdraft
    \bibliographystyle{ormsv080}
\else
    \bibliographystyle{spmpsci}
\fi
\bibliography{biblio}

\appendix

\section{Obtaining the qualifying condition} \label{appendix:QC}

The general qualifying condition \eqref{eq:QC} can be recovered under the setting of partial smoothness of $g$ and a usual non-degeneracy condition at the optimal point $x^\star$ and associated point $u^\star$ such that $x^\star = \prox_{\gamma g}(x^\star)$.

\begin{definition}[Relative interior]
The relative interior of a subset $S$, noted $\ri S$ is defined as
$$
\ri S = \{x\in S : \exists \varepsilon > 0 , \mathcal B(x, \varepsilon)\cap \Aff S \subseteq S \}
$$
where $\Aff$ denotes the affine hull of a set.
\end{definition}

\begin{definition}[Parallel space]
For a connex subset $A$, the parallel space of $A$, noted $\mathrm{par}~A$, is defined as the vector space parallel to the affine space generated by $A$.
\end{definition}

\begin{definition}[Manifold]
A subset $\M$ of $\RR^n$ is said to be a $p$-dimensional $\mathcal C^k$-submanifold of $\RR^n$ around $x \in \M$ ($1 \le k \le +\infty$) if there exists a local parameterization of $\M$ around $x$, that is, a $\mathcal C^k$ function $\varphi : \RR^p \to \RR^n$ such that $\varphi$ realizes a local homeomorphism between a neighborhood of $0 \in \RR^p$ and a neighborhood of $x \in\M$ and the derivative of $\varphi$ at $\varphi^{-1}(x)=0$ is injective.

A $p$-dimensional $\mathcal C^k$-submanifold of $\RR^n$ can alternatively be defined via a local equation, that is, a $\mathcal C^k$ function $\Phi : \RR^n \to \RR^{n-p}$ with a surjective derivative at $\bar x \in \M$, that satisfies for all $x$ close enough to $\bar x$:
$x \in\M \Leftrightarrow \Phi(x) = 0$.
\end{definition}

To lighten notations, we will only indicate the dimensionality and the smoothness degree of a manifold when it is relevant to the discussion.

\begin{definition}[Tangent space]
Given a point $x$ living in a manifold $\M$, defined either by a local parametrization $\varphi$ or a local equation $\Phi$, the tangent space of $\M$ at $x$, denoted $T_\M(x)$, is defined as:
$$
T_\M(x) = \text{Im}~ D_\varphi (0) = \text{Ker} D_\Phi (0)
$$
\end{definition}

\begin{definition}[Partly smooth function]
Let $g:\mathbb R^n\to\mathbb R$ be a proper, convex, lower semi-continuous function. $g$ is said to be partly smooth at $x$ relative to a
manifold $\M$ containing $x$ if $\partial g(x)\neq \emptyset$, and moreover
\begin{itemize}
    \item Smoothness: $\M$ is a $\mathcal C^2$-manifold around $x$, $g$ restricted to $\M$ is $\mathcal C^2$ around $x$;
    \item Sharpness: The tangent space $T_\M(x)$ coincides with $\mathrm{par} (\partial g(x))^\perp$;
    \item Continuity: The set-valued mapping $\partial g$ is continuous at $x$ relative to $\M$.
\end{itemize}
\end{definition}

\begin{lemma} \label{th:PSFimpliesQC}
Let $\M$ be a manifold and $u^\star$ a point such that $x^\star = \prox_{\gamma g}(u^\star)\in\M$. If $g$ is partly smooth at $x^\star$ relative to $\M$ and $\frac{u^\star-x^\star}{\gamma} \in \ri \partial g(x^\star)$, then \eqref{eq:QC} holds.
\end{lemma}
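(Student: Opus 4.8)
The plan is to deduce the ball-based qualifying condition \eqref{eq:QC} from the classical finite-identification theorem for partly smooth functions \cite{hare2004identifying}. First I would record the variational characterization of the proximity operator: for any $u$, one has $x=\prox_{\gamma g}(u)$ if and only if $(u-x)/\gamma\in\partial g(x)$. Writing $v^\star := (u^\star-x^\star)/\gamma$, the hypothesis reads exactly $v^\star\in\ri\partial g(x^\star)$, and the inclusion $v^\star\in\partial g(x^\star)$ already certifies that $x^\star=\prox_{\gamma g}(u^\star)\in\M$ as assumed.

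Next I would turn \eqref{eq:QC}, which quantifies over an entire ball, into a sequential statement amenable to the identification theorem, arguing by contradiction. If \eqref{eq:QC} failed, there would exist a sequence $u_j\to u^\star$ with $x_j:=\prox_{\gamma g}(u_j)\notin\M$. Since the proximity operator is nonexpansive \cite[Prop.~12.27]{bauschke2011convex}, we have $\|x_j-x^\star\|\le\|u_j-u^\star\|\to 0$, hence $x_j\to x^\star$; setting $v_j:=(u_j-x_j)/\gamma\in\partial g(x_j)$ then gives $v_j\to v^\star$. Using convexity of $g$ — pairing the subgradient inequality coming from $v_j\in\partial g(x_j)$ with the one coming from $v^\star\in\partial g(x^\star)$ and letting $j\to\infty$ — one checks that $g(x_j)\to g(x^\star)$. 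Thus $(x_j,v_j)\to(x^\star,v^\star)$ with $v_j\in\partial g(x_j)$ and $g(x_j)\to g(x^\star)$.

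Finally I would invoke the identification theorem: because $g$ is convex, proper, lower semicontinuous (hence prox-regular) and partly smooth at $x^\star$ relative to $\M$, and because $v^\star\in\ri\partial g(x^\star)$, every such sequence satisfies $x_j\in\M$ for all sufficiently large $j$, contradicting $x_j\notin\M$ for every $j$; this establishes \eqref{eq:QC}. The substantive step is this identification theorem, which I would cite rather than reprove. Should a self-contained argument be desired, that theorem is the genuine obstacle: its proof exploits the sharpness property $T_\M(x^\star)=(\para\partial g(x^\star))^\perp$ together with the relative-interiority $v^\star\in\ri\partial g(x^\star)$ to force nearby subgradient pairs onto $\M$, while the continuity of $\partial g$ relative to $\M$ rules out the $x_j$ drifting off the manifold. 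The only verifications left to us are the nonexpansiveness-based convergence $x_j\to x^\star$ and the convergence $g(x_j)\to g(x^\star)$, both routine for convex $g$, and it is the latter function-value convergence that I would be most careful to justify since it is a hypothesis of the identification theorem.
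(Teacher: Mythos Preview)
Your argument is correct, but it takes a different route from the paper's proof. The paper builds the two-variable function $\rho(u,x)=g(x)+\tfrac{1}{2\gamma}\|x-u\|^2$, shows it is partly smooth at $(u^\star,x^\star)$ relative to $\RR^n\times\M$, verifies strong minimality and $0\in\ri\partial\rho_{u^\star}(x^\star)$, and then invokes a \emph{parametric sensitivity} result \cite[Lem.~27]{DanidilisMalick2006geometryprox}, \cite[Th.~3.2,~6.2i]{hare2004identifying}: this yields, constructively, a map $u\mapsto\Phi(u)\in\M$ defined on a whole neighborhood of $u^\star$ whose values are critical (hence, by convexity, global) minimizers of $\rho_u$, i.e.\ $\Phi(u)=\prox_{\gamma g}(u)$. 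You instead argue by contradiction and appeal directly to the \emph{sequential identification} theorem of \cite{hare2004identifying}: any sequence $(x_j,v_j)$ with $v_j\in\partial g(x_j)$, $x_j\to x^\star$, $v_j\to v^\star\in\ri\partial g(x^\star)$ and $g(x_j)\to g(x^\star)$ must eventually lie in $\M$ (the tilt $g-\langle v^\star,\cdot\rangle$ reduces this to the ``$0\in\ri\partial$'' formulation). Your approach is shorter and avoids assembling the parametric framework; the paper's approach is constructive and delivers the local map $\Phi$ explicitly, which can be reused for smoothness or sensitivity statements beyond mere membership in $\M$. The minor verifications you flag (nonexpansiveness of the prox, and $g(x_j)\to g(x^\star)$ via the two subgradient inequalities) are indeed routine and correctly handled.
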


\begin{proof}
The proof follows the reasoning of \cite[Lem.~27, Th.~28]{DanidilisMalick2006geometryprox}. Let us define the function $\rho (u, x) = g(x) + \frac{1}{2\gamma}\|x-u\|^2$. Since $g$ is partly smooth at $x^\star$ relative to $\M$, the function $(u, x) \mapsto g(x)$ is also partly smooth at $(u^\star, x^\star)$ relative to $\RR^n\times\M$  \cite[Prop.~4.5]{lewis2002active}. Since the addition of a smooth function does not change this property  \cite[Cor.~4.6]{lewis2002active}, $\rho$ is partly smooth at $(u^\star, x^\star)$ relative to $\RR^n\times\M$.

Besides, defining the parametrized function $\rho_{u^\star}(\cdot) = \rho(u^\star, \cdot)$, we have that
\begin{itemize}
    \item[i)] $     x^\star = \prox_{\gamma g}(u^\star) = \argmin_{x\in\mathbb{R}^n} \{ g(x) + \frac{1}{2\gamma} \| x  - u^\star \|^2 \} = \argmin_{x\in\mathbb{R}^n} \rho_{u^\star} ( x ) $ and as $\rho_{u^\star}$ is $1/\gamma$-strongly convex, we have for all $x\in\mathbb{R}^n$, $  \rho_{u^\star}(x) \ge \rho_{u^\star}(x^\star) + \frac{1}{2\gamma}\|x-x^\star\|^2$;
    \item[ii)] the qualifying constraint $\frac{u^\star-x^\star}{\gamma} \in \ri \partial g(x^\star)$ implies that $0\in\ri \left( \partial g(x^\star) + \frac{x^\star-u^\star}{\gamma} \right) = \ri \partial \rho_{u^\star}(x^\star)$;
\end{itemize}
which enable us to use \cite[Lem.~27]{DanidilisMalick2006geometryprox} (or equivalently \cite[Th.~3.2 and 6.2i]{hare2004identifying}).

Thus, there exists a neighborhood $\mathcal B(u^\star, \varepsilon)$ of $u^\star$ and a function $\Phi$ such that for all $u$ in $\mathcal B(u^\star, \varepsilon)$, $\Phi(u)\in\M$ and is a critical point of $\rho_u$ restricted to a neighborhood of $x^\star$. Since $\rho_u$ is convex, $\Phi(u)$ is actually the global optimum of $\rho_u$. Therefore, we have that  for all $u$ in $\mathcal B(u^\star, \varepsilon)$, $\prox_{\gamma g}(u)\in \M$ which is exactly \eqref{eq:QC}.
\end{proof}

\section{Recalls on the (Accelerated) Proximal Gradient descent} \label{appendix:functionalcv}

We recall here the descent lemma for the composite objective function $F$, which is central in the analysis of any first order method.

\begin{lemma}[Descent lemma]\label{lem:descent} Let $\gamma>0$, the following inequalities hold for any $x, y \in \mathbb R^n$
\begin{align*}
F\left(\T(x)\right) + \frac{(1-\gamma L)}{2 \gamma}\left\|\T(x)-x\right\|^{2}+\frac{1}{2 \gamma}\left\|\T(x)-y\right\|^{2} &\le F(y) + \frac{1}{2 \gamma}\|x-y\|^{2}\\
F\left(\T(x)\right) + \frac{(2-\gamma L)}{2 \gamma}\left\|\T(x)-x\right\|^{2} + \frac{1}{ \gamma} \langle x-y, \T(x)-x \rangle &\le F(y)
\end{align*}
\end{lemma}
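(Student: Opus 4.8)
The plan is to reduce both displayed inequalities to a single intermediate bound and then process the resulting cross term in two different ways. First I would abbreviate $x^+ := \T(x) = \prox_{\gamma g}(x - \gamma \nabla f(x))$. The first-order optimality condition of the proximal subproblem defining $x^+$ yields a subgradient $\xi \in \partial g(x^+)$ of the explicit form $\xi = \frac{1}{\gamma}(x - x^+) - \nabla f(x)$; this is the only place where the definition of $\T$ and the prox enter the argument.

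Next I would assemble three elementary inequalities: (i) the smooth descent inequality from $L$-Lipschitzness of $\nabla f$, namely $f(x^+) \le f(x) + \langle \nabla f(x), x^+ - x\rangle + \frac{L}{2}\|x^+ - x\|^2$; (ii) the gradient inequality from convexity of $f$, i.e. $f(x) \le f(y) + \langle \nabla f(x), x - y\rangle$; and (iii) the subgradient inequality from convexity of $g$, i.e. $g(x^+) \le g(y) + \langle \xi, x^+ - y\rangle$. Summing (i)+(ii)+(iii) and substituting the explicit form of $\xi$, the terms involving $\nabla f(x)$ cancel and I obtain the single intermediate bound
\[
F(x^+) \le F(y) + \frac{L}{2}\|x^+ - x\|^2 + \frac{1}{\gamma}\langle x - x^+, x^+ - y\rangle .
\]

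Both lines of the lemma then follow by rewriting only this last cross term. For the first inequality I would apply the three-point identity $\langle x - x^+, x^+ - y\rangle = \frac{1}{2}\big(\|x - y\|^2 - \|x - x^+\|^2 - \|x^+ - y\|^2\big)$; collecting the two $\|x^+ - x\|^2$ contributions produces the coefficient $\frac{1-\gamma L}{2\gamma}$ and gives exactly the first line. For the second inequality I would instead split $x^+ - y = (x^+ - x) + (x - y)$, so that $\langle x - x^+, x^+ - y\rangle = -\|x^+ - x\|^2 - \langle x^+ - x, x - y\rangle$; combining $\frac{1}{\gamma} - \frac{L}{2} = \frac{2-\gamma L}{2\gamma}$ and using symmetry of the inner product yields the second line.

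There is no serious obstacle here: the result is a careful but routine computation. The only points requiring attention are extracting the subgradient $\xi$ with the correct sign from the prox optimality condition, and keeping track of signs when the gradient terms telescope and when the cross term is expanded in the two different ways. Everything else is bookkeeping.
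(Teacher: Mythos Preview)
Your argument is correct and is, in essence, the standard proof of this descent lemma. The only organizational difference from the paper is that the paper obtains the first inequality by citing \cite[Lem.~1]{chambolle2015convergence} and then derives the second from the first via the polarization identity $\|\T(x)-x\|^2 + \|y-x\|^2 = 2\langle \T(x)-x, y-x \rangle + \|y-\T(x)\|^2$, whereas you derive a single intermediate bound from first principles and then specialize it two ways; the underlying algebra is the same, and your version has the minor advantage of being self-contained.
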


\begin{proof}
The first inequality is directly equivalent to \cite[Lem.~1]{chambolle2015convergence}. The second is derived from the identity $ \|\T(x)-x\|^2 + \|y-x\|^2 = 2\langle \T(x)-x, y-x \rangle + \|y-\T(x)\|^2 $.
\end{proof}

We now give an accelerated descent lemma for the composite objective $F$, that is fundamental in the analysis of our provisional algorithm in \cref{subsec:condalg}.

\begin{lemma}[Accelerated Descent lemma]\label{lem:accdescent}
    Let Assumptions \ref{assumption1} and \ref{hyp:alpha} hold. For any $\gamma>0$, any pair of points $(x_k, y_k)$ and for $x^\star$ a fixed point of $\mathcal T_{\gamma}$ (i.e. a minimizer of $F$), we have (independently of the acceleration step)
    \begin{align*}
        t_{k}^2 v_k - t_{k-1}^2 v_{k-1} \le & -\frac{1-\gamma L}{2 \gamma} \|t_k x_{k+1}- t_k y_k\|^2  \\
      \nonumber  & ~~~ - \frac{1}{2\gamma}\|t_k x_{k+1} - (t_k-1)x_k -x^\star\|^2 + \frac{1}{2\gamma} \|t_k y_k - (t_k-1)x_k -x^\star\|^2 ,
    \end{align*}
    where $x_{k+1} = \mathcal T_{\gamma}(y_k)$, and $v_k:=F(x_{k+1})-F^\star$.
\end{lemma}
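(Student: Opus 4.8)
The plan is to derive everything from the first inequality of the descent lemma (Lemma~\ref{lem:descent}) applied at the point $x = y_k$, for which $\T(x) = x_{k+1}$, so that only two invocations are needed. First I would take $y = x_k$; using $F(x_k)-F^\star = v_{k-1}$ this rearranges to $v_k \le v_{k-1} - \frac{1-\gamma L}{2\gamma}\|x_{k+1}-y_k\|^2 - \frac{1}{2\gamma}\|x_{k+1}-x_k\|^2 + \frac{1}{2\gamma}\|y_k-x_k\|^2$. Then I would take $y = x^\star$; using $F(x^\star)=F^\star$ this gives $v_k \le -\frac{1-\gamma L}{2\gamma}\|x_{k+1}-y_k\|^2 - \frac{1}{2\gamma}\|x_{k+1}-x^\star\|^2 + \frac{1}{2\gamma}\|y_k-x^\star\|^2$. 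Crucially, neither bound uses the form of $y_k$, which is exactly why the lemma holds independently of whether the acceleration step was taken.

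Next I would combine these two upper bounds as a weighted sum with the \emph{nonnegative} weights $t_k-1$ and $1$ (nonnegativity, hence preservation of the inequality direction, follows from $t_k \ge 1$), and then scale the resulting inequality by $t_k$. The left-hand side becomes $t_k^2 v_k$; the $v_{k-1}$ term becomes $t_k(t_k-1)v_{k-1}$; and the two $\|x_{k+1}-y_k\|^2$ contributions merge into $-\frac{1-\gamma L}{2\gamma}\,t_k^2\|x_{k+1}-y_k\|^2 = -\frac{1-\gamma L}{2\gamma}\|t_k x_{k+1}-t_k y_k\|^2$, which is already the first target term. To dispose of $t_k(t_k-1)v_{k-1}$, I would invoke Assumption~\ref{hyp:alpha}(i), namely $t_k^2 - t_k \le t_{k-1}^2$, together with $v_{k-1} = F(x_k)-F^\star \ge 0$, to bound it by $t_{k-1}^2 v_{k-1}$ and move it to the left, producing the wanted $t_k^2 v_k - t_{k-1}^2 v_{k-1}$.

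The remaining, and only genuinely delicate, step is to show that the leftover quadratics collapse into the last two target terms. After scaling these leftovers read $-\frac{t_k}{2\gamma}\big[(t_k-1)\|x_{k+1}-x_k\|^2 + \|x_{k+1}-x^\star\|^2\big]$ and $+\frac{t_k}{2\gamma}\big[(t_k-1)\|y_k-x_k\|^2 + \|y_k-x^\star\|^2\big]$. I would apply the algebraic identity $t\big[(t-1)\|a-c\|^2 + \|a-d\|^2\big] = \|t a - (t-1)c - d\|^2 + (t-1)\|c-d\|^2$, which is the barycentric expansion $\sum_i \lambda_i\|a-p_i\|^2 = \|a-\bar p\|^2 + \sum_i \lambda_i \|p_i-\bar p\|^2$ with weights $\lambda = (\tfrac{t-1}{t},\tfrac1t)$ and points $(c,d)$, multiplied through by $t$. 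Used once with $a=x_{k+1}$ and once with $a=y_k$ (both times $c=x_k$, $d=x^\star$, $t=t_k$), it yields $\|t_k x_{k+1}-(t_k-1)x_k-x^\star\|^2$ and $\|t_k y_k-(t_k-1)x_k-x^\star\|^2$, plus in each case the identical residual $(t_k-1)\|x_k-x^\star\|^2$. Since one enters with a minus sign and the other with a plus sign, these residuals cancel exactly, leaving precisely the claimed inequality. The main obstacle is therefore purely bookkeeping: tracking the weights correctly through the convex combination and the scaling, and recognizing that this three-point identity is the tool that makes the $x_k$–$x^\star$ cross terms vanish; there is no analytic difficulty beyond respecting the sign conventions in the descent lemma.
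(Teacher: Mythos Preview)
Your proof is correct and follows essentially the same route as the paper: two applications of the descent lemma at $y=x_k$ and $y=x^\star$, combined with weights $(t_k-1,1)$, scaled by $t_k$, and closed using Assumption~\ref{hyp:alpha}(i) together with an algebraic identity. The only cosmetic difference is that the paper works with the second (inner-product) form of Lemma~\ref{lem:descent} and finishes via the polarization identity $\|c-a\|^2+2\langle a-b,c-a\rangle=\|b-c\|^2-\|b-a\|^2$, whereas you work with the first (three-squares) form and finish via the barycentric expansion; these are equivalent rearrangements of the same computation.
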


\begin{proof}
    The proof follows the same global layout as the one of FISTA \cite{beck2009fista}.  Using the Lemma~\ref{lem:descent} at $(x=y_k, y=x_k)$ and $(x=y_k, y=x^\star)$,
    \begin{align*}
    v_k-v_{k-1} &= F(x_{k+1}) - F(x_k) \le -\frac{2-\gamma L}{2 \gamma}\|x_{k+1}-y_k\|^2 - \frac{1}{ \gamma} \langle y_k-x_k, x_{k+1}-y_k \rangle \\
    v_k &= F(x_{k+1}) - F^\star \le -\frac{2-\gamma L}{2 \gamma}\|x_{k+1}-y_k\|^2 - \frac{1}{ \gamma} \langle y_k-x^\star, x_{k+1}-y_k \rangle
    \end{align*}

    The first equation times $(t_k-1)$ added to the second yields:
    \begin{align*}
    t_k v_k - (t_k-1)v_{k-1} &\le -\frac{2-\gamma L}{2 \gamma} t_k \|x_{k+1}-y_k\|^2 - \frac{1}{ \gamma} \langle t_k y_k - (t_k-1)x_k -x^\star, x_{k+1}-y_k \rangle
    \end{align*}

    Multiplying by $t_k$, using the relation $t_k^2-t_k \le t_{k-1}^2$ from \cref{hyp:alpha}(i), we get:
    \begin{align*}
    t_{k}^2 v_k - t_{k-1}^2 v_{k-1}
    &\le -\frac{2-\gamma L}{2 \gamma} \|t_k x_{k+1}- t_k y_k\|^2 - \frac{1}{ \gamma} \langle t_k y_k - (t_k-1)x_k -x^\star, t_k x_{k+1}- t_k y_k \rangle
    \end{align*}

    Applying the identity $\|c-a\|^2+2\langle a-b,c-a\rangle = \|b-c\|^2 - \|b-a\|^2$ to the inner product of the last identity yields the result.
\end{proof}

\section{About Error Bounds}
\label{apx:KL}

In order to connect the functional suboptimality with the iterates behavior, the geometry of the function can be used. For a proper convex lower-semicontinuous function $F$ achieving its minimum $F^\star$, two common variants of geometric inequalities are Error Bounds and Kurdyka-{\L}ojasiewicz inequalities \cite{Bolte2007KL,bolte2017error,thesetrong}. For any\footnote{These properties are often supposed to hold only locally but can be globalized easily \cite[Sec.~2.4, Cor.~6]{bolte2017error}.} $x \notin \argmin F$, they respectively write
\begin{itemize}
    \item \emph{Error Bound} ~~~~~~~~~~~~~~~~~~~~~~~~~~ $\varphi(F(x)-F^\star)\geq \dist(x,\argmin F)$
    \item \emph{Kurdyka-{\L}ojasiewicz} (KL) ~~~~~~~~ $ \dist(0,\partial F(x))\geq 1/\varphi'(F(x)-F^\star)$
\end{itemize}
where $\varphi(t) = Ct^\theta /\theta$ with $C>0$ and $\theta\in(0,1]$ is called a {desingularizing} function. These properties are widely satisfied (e.g. by any semi-algebraic function \cite{Bolte2007KL}) but the desingularizing function (more particularly $\theta$) is often hard to estimate\footnote{An example of such a computation is performed for the lasso problem $ F(x) = \frac{1}{2} \left\|Ax-b\right\|_2^2 +  \lambda_1 \|x\|_1$ in \cite[Lem.\;10]{bolte2017error}}.

It is easy to see that Assumption~\ref{EB} is a (global) error bound with $\varphi(t) =  (1/\beta)^{1/p} t^{1/p}$ (matching the definition of $\varphi$ with $C = 1/p (1/\beta)^{1/p}>0$ and $\theta=1/p\in(0,1]$)  but the knowledge of the constants is not necessary.

Then, an important result about Error Bounds and KL inequalities is that they are equivalent with the same desingularizing function \cite[Th.~5]{bolte2017error} which allows us to get the following result for the proximal gradient operator.

\begin{lemma}
    \label{lem:boundKL}
Let Assumptions~\ref{assumption1} and \ref{EB} hold. Then, there exists a constant $B>0$ such that for all $x$
\begin{align*}
     \|x-\mathsf{T}_\gamma(x) \| \geq
B  ~ \dist^{1-\frac{1}{p}} ~(x,\argmin F) .
\end{align*}
\end{lemma}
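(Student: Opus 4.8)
The plan is to route through the equivalence between error bounds and the Kurdyka–{\L}ojasiewicz (KL) inequality, as anticipated by the discussion preceding the statement. First I would record that Assumption~\ref{EB} is exactly a global error bound with desingularizing function $\varphi(t)=(1/\beta)^{1/p}\,t^{1/p}$, so that $\varphi(F(x)-F^\star)\ge\dist(x,\argmin F)$; this matches the template $\varphi(t)=Ct^{\theta}/\theta$ with $\theta=1/p\in(0,1]$. Invoking the equivalence of \cite[Th.~5]{bolte2017error}, in its globalized form \cite[Cor.~6]{bolte2017error}, the \emph{same} $\varphi$ yields the KL inequality $\dist(0,\partial F(x))\ge 1/\varphi'(F(x)-F^\star)=p\,\beta^{1/p}(F(x)-F^\star)^{1-1/p}$ for every $x\notin\argmin F$. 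The crux of the argument is then to evaluate this inequality not at $x$ but at the prox-gradient image $\mathsf{T}_\gamma(x)$, where a subgradient of $F$ is readily available.

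Second, I would exhibit that subgradient explicitly. Writing $x^+=\mathsf{T}_\gamma(x)=\prox_{\gamma g}(x-\gamma\nabla f(x))$, the optimality condition of the proximal minimization gives $\tfrac1\gamma(x-x^+)-\nabla f(x)\in\partial g(x^+)$, whence
\[ \nabla f(x^+)+\tfrac1\gamma(x-x^+)-\nabla f(x)\in\nabla f(x^+)+\partial g(x^+)=\partial F(x^+). \]
Using the $L$-Lipschitzness of $\nabla f$ (Assumption~\ref{assumption1}(i)) and the triangle inequality, the norm of this element is at most $(L+\tfrac1\gamma)\|x-x^+\|$, so $\dist(0,\partial F(x^+))\le(L+\tfrac1\gamma)\,\|x-\mathsf{T}_\gamma(x)\|$. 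Combining this with the KL inequality applied at $x^+$ produces
\[ \|x-\mathsf{T}_\gamma(x)\|\;\ge\;\frac{p\,\beta^{1/p}}{L+1/\gamma}\,\bigl(F(\mathsf{T}_\gamma(x))-F^\star\bigr)^{1-1/p}, \]
i.e. the prox-gradient residual is lower bounded by a power $1-1/p$ of the functional gap.

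Finally I would convert the functional gap back into a distance with one more application of Assumption~\ref{EB}, giving $(F(x^+)-F^\star)^{1-1/p}\ge\beta^{1-1/p}\dist^{\,p-1}(x^+,\argmin F)$, and absorb all constants into a single $B>0$ to reach a bound of the announced residual-versus-distance shape. The step I expect to be the main obstacle is precisely this last bookkeeping: the clean chain naturally produces a power $1-1/p$ on the \emph{functional} gap and, after the second error bound, a distance measured at the image $\mathsf{T}_\gamma(x)$, so some care is needed to relate the base point and the exponent to the $\dist^{1-1/p}(x,\argmin F)$ form in the statement (for the use made in Theorems~\ref{th:test1}--\ref{th:test2} only the positivity of the exponent, hence $p>1$, matters, since one deduces that a small residual forces a small distance). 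One must also ensure the EB$\Leftrightarrow$KL equivalence is genuinely used globally and that $\beta,p$ propagate consistently through $\varphi'$; the sole degenerate case is the sharp regime $p=1$, where $1-1/p=0$ and the inequality degenerates, consistent with the finite-time convergence remarked upon in Section~\ref{subsec:tests}.
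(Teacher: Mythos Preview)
Your approach is the same as the paper's: derive the KL inequality from Assumption~\ref{EB} via \cite[Th.~5]{bolte2017error}, combine it with the subgradient bound $\dist(0,\partial F(\mathsf{T}_\gamma(x)))\le\tfrac{L\gamma+1}{\gamma}\|x-\mathsf{T}_\gamma(x)\|$ (which the paper cites from \cite[Sec.~4.1]{bolte2017error} rather than deriving, as you do, from the prox optimality condition), and chain back through the error bound. Your bookkeeping concerns are well-founded and shared by the paper's own proof: its displayed chain also lands on exponent $(1-\theta)/\theta=p-1$ rather than the stated $1-1/p$, and the combination of the two displays implicitly evaluates at $\mathsf{T}_\gamma(x)$ while writing $x$; as you correctly observe, for the use in Theorems~\ref{th:test1}--\ref{th:test2} only the positivity of the exponent (hence $p>1$) is needed to infer boundedness.
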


\begin{proof}
First, let us notice that the KL and error bound properties can be combined as
\begin{align*}
\dist(0,\partial F(x)) \geq
\frac{1}{C} ( F(x)-F^\star )^{1-\theta}  \geq
\frac{1}{C} \left(\frac{\theta}{C}\right)^{\frac{1-\theta}{\theta}} \left(\dist(x,\argmin F)\right)^{\frac{1-\theta}{\theta}}  .
\end{align*}
Furthermore, using that $
\dist(0,\partial F(\mathsf{T}_\gamma(x) )) \leq \frac{L\gamma + 1}{\gamma} \|x-\mathsf{T}_\gamma(x) \| $ for all $x\in \mathbb{R}^n$ (see e.g. \cite[Sec.~4.1]{bolte2017error}), we get that
\begin{align*}
 \|x-\mathsf{T}_\gamma(x) \| \geq
\frac{\gamma}{C(L\gamma+1)} \left(\frac{\theta}{C}\right)^{\frac{1-\theta}{\theta}} \left(\dist(x,\argmin F)\right)^{\frac{1-\theta}{\theta}}  ,
\end{align*}
which, combined with the fact that $\theta=1/p$, gets the claimed result.
\end{proof}

\section{Proof of Corollary~\ref{lem:lincv}} \label{sec:prooflincv}

\begin{proof}
First, \cref{th:test1} or \cref{th:test2} give the convergence of the sequence $(x_k)$ to the unique minimizer $x^\star$ and finite time convergence since the partial smoothness assumption of \cref{lem:lincv} implies the qualifying constraint \eqref{eq:QC} of both theorems from \cref{th:PSFimpliesQC}.

In order to show the linear convergence behavior, we follow \cite{liang2017localcvFBtype} and \cite[Chap. 2.1.2]{polyakintroduction} and apply their first-order analysis of accelerated proximal gradient to \eqref{eq:condalg}. We are in the same setting and have compatible assumptions so that we can apply \cite[Prop.~4.5]{liang2017localcvFBtype} to \eqref{eq:condalg} as often as identification happened since both tests will return acceleration after that moment. We have, for $r_k=x_k-x^\star$, $d_k = [r_{k-1} \quad r_k]^T$:
\begin{equation} \label{eq:linearizedFISTA}
    d_{k+1} = M d_k + e_k, \qquad \|e_k\| = o(\|d_k\|)
\end{equation}
with $M$ a matrix of spectral radius $\rho(M)<1$ as $\gamma \in (0, 1/L]$ from \cite[Cor.~4.9, Rem.~4.10]{liang2017localcvFBtype}.

Take any $\varepsilon \in(0,(1-\rho(M))/2)$ and let $K_2>0$ be the smallest time from which i) identification holds; and ii) $\|e_k\| / \|d_k\| \leq \varepsilon$. $K_2$ is finite since identification happens in finite time and $\|e_k\| = o(\|d_k\|)$.

Expliciting \cref{eq:linearizedFISTA} yields, for $k\geq 0$,
\begin{equation*}
    d_{K_2+k+1} = M^k d_{K_2+1} + \sum_{l=1}^k M^{k-l}e_{K_2+l}.
\end{equation*}

Taking the norm and applying triangular inequalities yield
\begin{align*}
    \|d_{K_2+k+1}\| &\le \|M^k\| \|d_{K_2+1}\| + \sum_{l=1}^k \|M^{k-l}\|\|e_{K_2+l}\| \le C_M \rho^k \|d_{K_2+1}\| + C_M \sum_{l=1}^k \rho^{k-l}\|e_{K_2+l}\|
\end{align*}
where $\rho:=\rho(M) +\varepsilon$ and $C_M>0$ is a constant such that $\|M^k\| \leq C_M \rho^k $ for all $k$ (see \cite[Cor.~5.6.13]{horn2012matrix}).

Let us denote $w_{k+1} = C_M^{-1} (\rho + \varepsilon)^{-k}\|d_{K_2+k+1}\|$  ($\rho + \varepsilon = \rho(M) + 2\varepsilon \in (0,1)$) and show that this sequence is uniformly bounded by a constant $C_w$. The initialization is clear. Suppose that $w_l\leq C$ for all $l\leq k$; multiplying the previous equation by $C_M^{-1} (\rho+\epsilon)^{-k}$ yields
\begin{align*}
    w_{k+1} &\le \left(\frac{\rho}{\rho+\varepsilon}\right)^k \|d_{K+1}\| +  \frac{1}{\rho+\epsilon}\sum_{l=1}^k \left(\frac{\rho}{\rho+\epsilon}\right)^{k-l} \frac{\|d_{K+l}\|}{(\rho+\epsilon)^{l-1}}\frac{\|e_{K+l}\|}{\|d_{K+l}\|} \\
    &=  \left(\frac{\rho}{\rho+\epsilon}\right)^k w_1 +  \frac{1}{\rho+\epsilon}\sum_{l=1}^k \left(\frac{\rho}{\rho+\epsilon}\right)^{k-l} w_l \frac{\|e_{K+l}\|}{\|d_{K+l}\|}
\end{align*}

By recursion, we have
\begin{align*}
    w_{k+1} &\le C_w \left(\frac{\rho}{\rho+\epsilon}\right)^k  + C_w \frac{\epsilon}{\rho+\epsilon}  \sum_{l=1}^k \left(\frac{\rho}{\rho+\epsilon}\right)^{k-l} \\
    & = C_w  \left(\frac{\rho}{\rho+\epsilon}\right)^k + C_w \frac{\epsilon}{\rho+\epsilon} \frac{1-\left(\frac{\rho}{\rho+\epsilon}\right)^k}{1-\frac{\rho}{\rho+\varepsilon}} = C_w
\end{align*}

Therefore, for all $k\ge 0$,  $\|d_{K_2+k+1}\| \le C_M C_w (\rho(M) + 2\varepsilon)^k \|d_{K+1}\|$ for any $\varepsilon>0$.
\end{proof}

\section{Additional Illustrations}

We provide here additional illustrations \secondreview{\cref{fig:reg_l1_zeroinit} and \cref{fig:reg_lnuclear_zeroinit}}, which consist in the same experiments as \secondreview{in \cref{fig:reg_l1_randinit}} and \cref{fig:reg_lnuclear_randinit} except for the starting point of the algorithms\secondreview{, which are taken here as} the null vector (or matrix). The conclusions are quite similar to those of  \cref{sec:numexps_batchtests}; the identification stability of the proposed methods is even more explicit.

\secondreview{
    Besides, we also report one further experiment where the solution is non qualified, and the structure-encoding manifolds have non-null curvature. The smooth part $f$ is again a least-squares function, while the non-smooth part is defined as $g(x) = \max(0, \|x_{1:5}\|_{1.3}-1) + \ldots + \max(0, \|x_{45:50}\|_{1.3}-1)$ for $x\in\mathbb R^{50}$. The structure-encoding manifolds of $g$ are any cartesian product $\M = \times_{i=1}^{10} \M_i \subset\mathbb R^{50}$, where each manifold $\M_i$ is either $\mathbb R^5$ or $\{x\in\mathbb R^5 : \|x\|_{1.3}=1\}$ (i.e. the 1.3-norm unit sphere). Fig.~\ref{fig:reg_distball_randinit} displays the performance of the four algorithms of interest (stopped as soon as suboptimality gets below $10^{-12}$). None is able to recover the complete structure of the solution, which lives in $\M^\star = \times_{i=1}^{10} \{x\in\mathbb R^5 : \|x\|_{1.3}=1\}$. This is not surprising for a non-qualified problem. The vanilla proximal gradient recovers half of the elementary manifolds, and $\idtest^2$ recovers 4 but much faster. Interestingly, the accelerated proximal gradient and $\idtest^1$ iterates are not able to retain even one of the manifolds they encountered. This behavior is consistent with the one noticed in \cref{fig:1.3ball} and suggests that proximal gradient may have more robust identification properties than accelerated proximal gradient for non-qualified problems. The second test seems to behave comparably to proximal gradient descent in terms of identification, while converging with the same number of iterations as accelerated proximal gradient.
}

\begin{figure}[hp]
        \centering
        \includegraphics[width=0.70\textwidth]{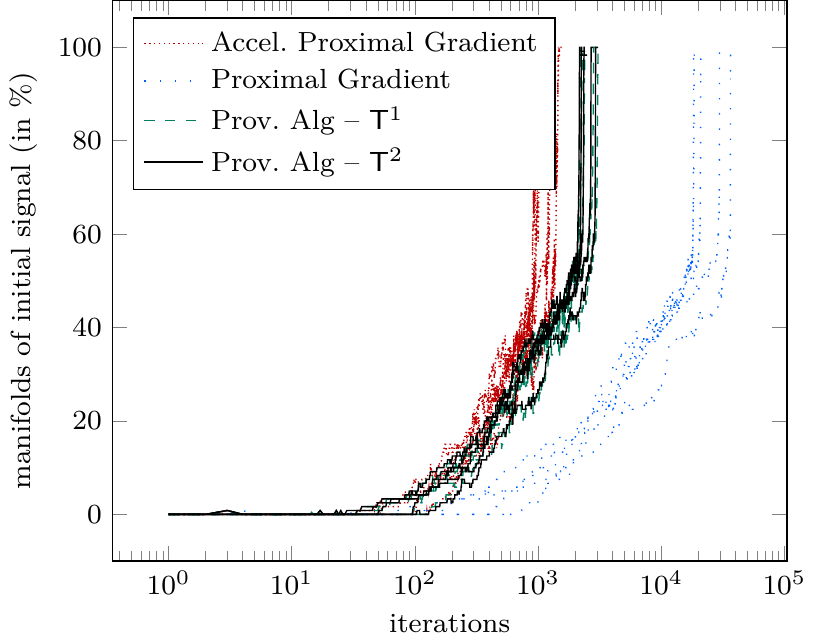}
        \caption{$g=\|\cdot\|_1$ -- 5 runs w/ initialization at $0$ -- 120 manifolds to identify\label{fig:reg_l1_zeroinit}}
\end{figure}

\begin{figure}
    \centering
    \begin{subfigure}[t]{0.49\textwidth}
        \centering
        \includegraphics[width=\textwidth]{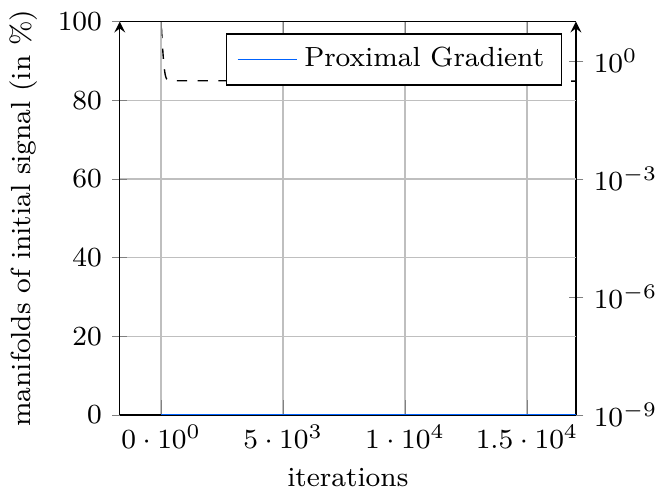}
    \end{subfigure}\hfill
    \begin{subfigure}[t]{0.49\textwidth}
        \centering
        \includegraphics[width=\textwidth]{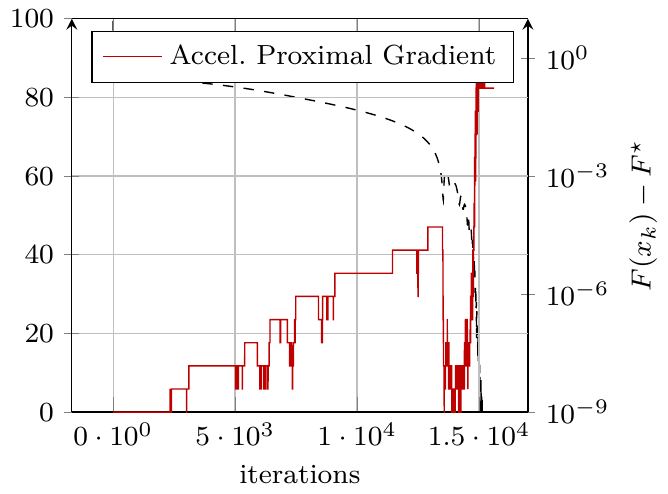}
    \end{subfigure}\\

    \begin{subfigure}[t]{0.49\textwidth}
        \centering
        \includegraphics[width=\textwidth]{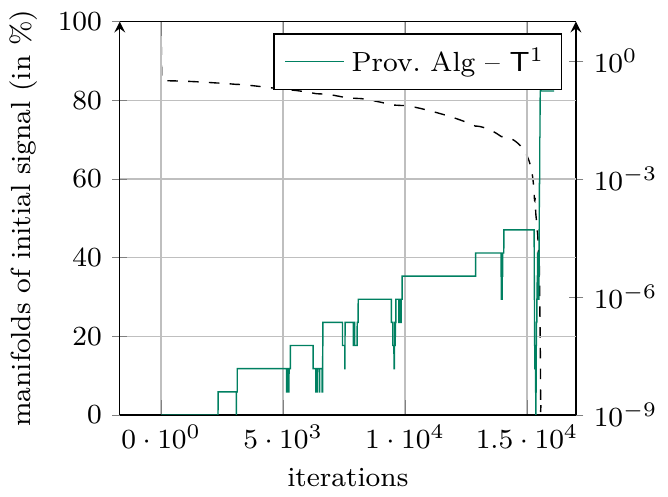}
    \end{subfigure}\hfill
    \begin{subfigure}[t]{0.49\textwidth}
        \centering
        \includegraphics[width=\textwidth]{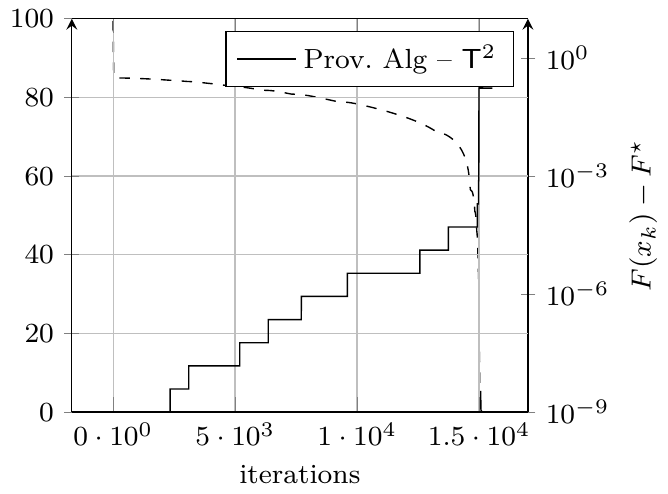}
    \end{subfigure}

    \caption{$g=\|\cdot\|_*$ -- 1 run w/ null initialization -- 17 manifolds to identify\label{fig:reg_lnuclear_zeroinit}}
\end{figure}

\begin{figure}[hp]
    \centering
    \includegraphics[width=0.60\textwidth]{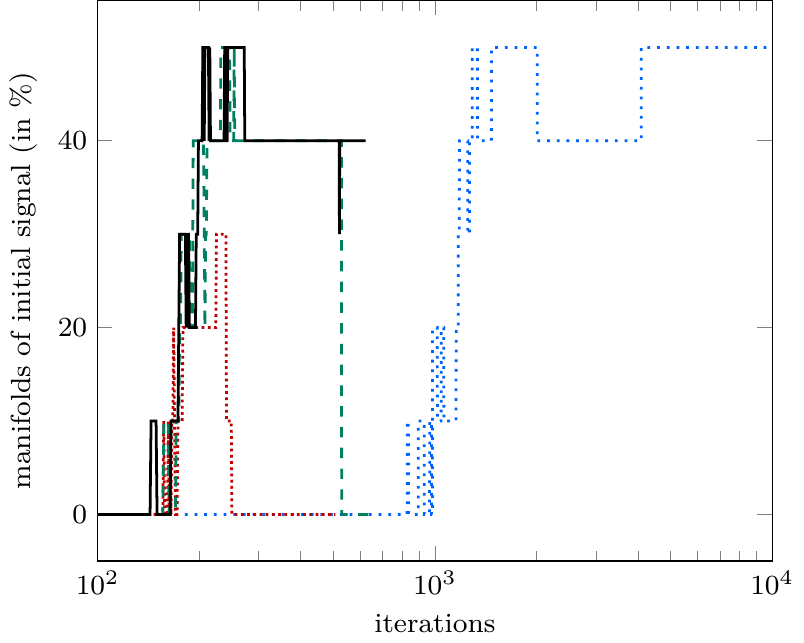}
    \caption{\secondreview{$g(x) = \max(0, \|x_{1:5}\|_{1.3}-1) + \ldots + \max(0, \|x_{45:50}\|_{1.3}-1)$ -- 1 runs w/ random initialization -- 10 manifolds to identify\label{fig:reg_distball_randinit}}}
\end{figure}

\end{document}